\documentclass[11pt]{article}

\usepackage{ascmac}
\usepackage{amsthm}
\usepackage{amsmath}
\usepackage{amssymb}
\usepackage{amsfonts}
\usepackage{bm}

\pagestyle{myheadings}
\markboth{T. Onozuka}{Zeros of a polynomial of $\zeta^{(j)}(s)$}

\frenchspacing

\textwidth=13.5cm
\textheight=23cm
\parindent=16pt
\oddsidemargin=1.5cm
\evensidemargin=1.5cm
\topmargin=-0.5cm

\newtheorem{thm}{Theorem}[section]
\newtheorem{lem}[thm]{Lemma}

\newtheorem{cor}[thm]{Corollary}

\makeatletter
\@addtoreset{equation}{section}

\makeatother

\title{Zeros of a polynomial of $\zeta^{(j)}(s)$}
\author{TOMOKAZU ONOZUKA}

\begin{document}
\baselineskip=17pt
\date{}
\maketitle
\renewcommand{\thefootnote}{}
\footnote{2010 \emph{Mathematics Subject Classification}: Primary 11M06.}
\footnote{\emph{Key words and phrases}: the Riemann zeta function, the Riemann-von Mangoldt formula, a-point, derivative, equidistribution.}
\renewcommand{\thefootnote}{\arabic{footnote}}
\setcounter{footnote}{0}
\begin{abstract}
We give results on  zeros of a polynomial of $\zeta(s),\zeta'(s),\ldots,\zeta^{(k)}(s)$. First, we give a zero free region and prove that there exist zeros corresponding to the trivial zeros of the Riemann zeta function. Next, we estimate the number of zeros whose imaginary part is in $(1,T)$. Finally, we study the distribution of the real part and the imaginary part of zeros, respectively.
\end{abstract}
\section{Introduction}

The Riemann zeta function $\zeta(s)$ is one of the most important functions in number theory, and its importance comes from its relation to the distribution of primes. The theory of the Riemann zeta function has a famous conjecture, which is the Riemann hypothesis. The Riemann hypothesis states that all of the nontrivial zeros of the Riemann zeta function are located on the critical line $\Re (s)=1/2$. If the Riemann hypothesis is true, many conjectures in number theory hold. Therefore a lot of mathematicians study the zeros of the Riemann zeta function. 

Derivatives of the Riemann zeta function are also important because of the relation between its nontrivial zeros and the Riemann hypothesis. Speiser \cite{spe} proved that the Riemann hypothesis is equivalent to $\zeta'(s)$ having no zeros in $0<\Re(s)<1/2$. Levinson and Montgomery \cite{lm} proved that the Riemann hypothesis implies that $\zeta^{(k)}(s)$ has at most a finite number of non-real zeros in $\Re(s)<1/2$ for $k\geq1$. In the case $k=2,3$, Yildirim \cite{yi} showed that the Riemann hypothesis implies that  $\zeta''(s)$ and $\zeta'''(s)$ have no  zeros in the strip $0\leq\Re(s)<1/2$.

Zeros of derivatives of the Riemann zeta function was investigated by Berndt \cite{be} and Levinson and Montgomery \cite{lm}. Berndt \cite{be} gave a formula of the Riemann-von Mangoldt type for $\zeta^{(k)}(s)$ with $k\geq1$. 
For function $f(s)$, let $N_{f(s)}(T_1,T_2)$ be the number of zeros of $f(s)$ counted with multiplicity in the region $T_1<\Im s<T_2$.
Then the Riemann-von Mangoldt formula states that for sufficiently large T, we have
\begin{align}
N_{\zeta(s)}(0,T)=
  \displaystyle\frac{T}{2\pi}\log\frac{T}{2\pi}-\frac{T}{2\pi}+O(\log T).\label{RVF1}
\end{align}
Berndt \cite{be} generalized this formula for $\zeta^{(k)}(s)$ with $k\geq1$;
\begin{align}
N_{\zeta^{(k)}(s)}(0,T)=
  \displaystyle\frac{T}{2\pi}\log\frac{T}{4\pi}-\frac{T}{2\pi}+O(\log T).\label{RVF2}
\end{align}
Hence the number of zeros of $\zeta^{(k)}(s)$ is asymptotically equal to the number of zeros of $\zeta(s)$ in $0<\Im(s)<T$.

One of the most interesting objects of $\zeta(s)$ is the distribution of the real part of nontrivial zeros. In 1914, Bohr and Landau \cite{bl} showed that most of nontrivial zeros of $\zeta(s)$ lie near the critical line. The real part of zeros of  $\zeta^{(k)}(s)$ was also studied by Levinson and Montgomery \cite{lm}. They showed 
\begin{align}
\notag &2\pi\sum_{T<\gamma^{(k)}<T+U}\left(\beta^{(k)}-\frac{1}{2}\right)\\
&\qquad=kU\log\log\frac{T}{2\pi}-U\log\frac{(\log2)^k}{2^{1/2}}+O\left(\frac{U^2}{T\log T}\right)+O\left(\log T\right)\label{zeroright}
\end{align}
for $0<U<T$ and $k\geq1$, where $\rho^{(k)}=\beta^{(k)}+i\gamma^{(k)}$ denotes the zeros of $\zeta^{(k)}(s)$. This estimate implies that zeros of $\zeta^{(k)}(s)$ would be mainly located in the right half plane $\Re(s)>1/2$. They also considered zeros near the critical line. Let $N_k^-(c,T)$ denote the number of zeros of $\zeta^{(k)}(s)$ in $0<\Im (s)<T$, $\Re(s)<c$. Similarly, $N_k^+(c,T)$ denotes the number of zeros of $\zeta^{(k)}(s)$ in $0<\Im (s)<T$, $\Re(s)>c$. Then, they \cite{lm} estimated
\begin{align}
N_k^+\left(\frac{1}{2}+\delta,T\right)+N_k^-\left(\frac{1}{2}-\delta,T\right)=O\left(\frac{T\log\log T}{\delta}\right),\label{T3}
\end{align}
for $\delta>0$ uniformly. When $\delta=(\log\log T)^2/\log T$, by (\ref{RVF2}), we have
\begin{align*}
N_k^+\left(\frac{1}{2}+\delta,T\right)+N_k^-\left(\frac{1}{2}-\delta,T\right)=O\left(\frac{N_{\zeta^{(k)}(s)}(0,T)}{\log\log T}\right).
\end{align*}
Hence similar to zeros of $\zeta(s)$, most of zeros of $\zeta^{(k)}(s)$ are also clustered around $\Re(s)=1/2$.

On the other hand, the imaginary part of zeros of $\zeta^{(k)}(s)$ was also investigated. 
In the case $k=0$, for $x>1$, Landau \cite{la} proved
\begin{align}
\sum_{0<\gamma^{(0)}<T}x^{\rho^{(0)}}=-\Lambda(x)\frac{T}{2\pi}+O(\log T)=o(N_{\zeta(s)}(0,T)),\label{Xlandau}
\end{align}
where $\Lambda(x)$ is the von Mangoldt $\Lambda$ function if $x$ is an integer, and otherwise $\Lambda(x)=0$.
Since most of $\Re (\rho^{(0)})$ are close to $1/2$, we have roughly
\begin{align*}
x^{1/2}\sum_{0<\gamma^{(0)}<T}\exp(i\gamma^{(0)}\log x)\approx \sum_{0<\gamma^{(0)}<T}x^{\rho^{(0)}}\approx o(N_{\zeta(s)}(0,T))
\end{align*}
for any $x>1$.
By this estimate, $\{\gamma^{(0)}$ mod $\frac{2\pi}{\log x}\}_{\gamma^{(0)}>0}$ might be regarded as an uniformly distributed sequence.  Actually, it is known that $\{\alpha\gamma^{(0)}\}_{\gamma^{(0)}>0}$ is uniformly distributed modulo one for any non-zero real $\alpha$. This fact was first proved by Rademacher \cite{ra} under the Riemann hypothesis. After then, Elliot \cite{el} remarked that this result holds unconditionally, and Hlawka \cite{hl} finally proved it unconditionally.

Formulas \eqref{RVF1}-\eqref{Xlandau} were generalized by many mathematicians. For $a\in\mathbb{C}$, Landau \cite{bo} gave a formula of the Riemann-von Mangoldt type for $\zeta(s)-a$;
\begin{align}
\label{T1}N_{\zeta(s)-a}(1,T)=\begin{cases}
  \displaystyle\frac{T}{2\pi}\log\frac{T}{2\pi}-\frac{T}{2\pi}+O(\log T)&(a\neq1),\\
\\
  \displaystyle\frac{T}{2\pi}\log\frac{T}{4\pi}-\frac{T}{2\pi}+O(\log T)&(a=1).
\end{cases}
\end{align}
The author \cite{on} considered $\zeta^{(k)}(s)-a$ and gave a formula;
\begin{align*}
N_{\zeta^{(k)}(s)-a}(1,T)=\frac{T}{2\pi}\log\frac{T}{2\pi}-\frac{T}{2\pi}+O(\log T)
\end{align*}
for $k\geq1$ and $a\neq0$. (The case $k=0$ was given by Landau, and the case $a=0$ was given by Berndt as mentioned above.) Furthermore Koutsaki, Tamazyan, and Zaharescu \cite{ko} studied linear combinations of $\zeta^{(j)}(s)$, and they proved
\begin{align*}
N_{f(s)}(0,T)=\frac{T}{2\pi}\log\frac{T}{2\pi}-\frac{T}{2\pi}+O(\log T),
\end{align*}
where $f(s)=c_0\zeta(s)+c_1\zeta'(s)+\cdots+c_k\zeta^{(k)}(s)$ with $c_0,\ldots,c_k\in\mathbb{R}$ and $c_0,c_k\neq0$. 
In addition, Nakamura \cite{na} studied polynomials of derivatives of zeta functions by using the universality theorem, and he proved that for any $1/2<\sigma_1<\sigma_2<1$, there exists a constant $C$ such that $P(s)$ has more than $CT$ zeros in $\sigma_1<\Re(s)<\sigma_2$ and $0<\Im(s)<T$,
where $P(s)$ is a polynomial of derivatives of the Riemann zeta function.
As a generalization of \eqref{zeroright} and \eqref{T3}, Levinson \cite{le} proved that for sufficiently large $T$, $T^{1/2}\leq U \leq T$, $\delta=(\log\log T)^2/\log T$ and $a\in\mathbb{C}$, we have
\begin{align}
N_{\zeta(s)-a}^+\left(\frac{1}{2}+\delta;T,T+U\right)+N_{\zeta(s)-a}^-\left(\frac{1}{2}-\delta;T,T+U\right)=O\left(\frac{U\log\log T}{\delta}\right),\label{levin}
\end{align}
where $N_{f(s)}^+\left(c;T_1,T_2\right)$ and $N_{f(s)}^-\left(c;T_1,T_2\right)$ are defined as the number of zeros of $f(s)$ in $\{s\in\mathbb{C}\ |\ T_1<\Im (s)<T_2,\Re(s)>c\}$ and $\{s\in\mathbb{C}\ |\ T_1<\Im (s)<T_2,\Re(s)<c\}$, respectively. He proved \eqref{levin} with $\delta=(\log\log T)^2/\log T$, in addition, he noted that this result can be generalized for any small $\delta>0$. In the proof of \eqref{levin}, he gave an estimate of the sum $2\pi\sum_{T<\gamma_a<T+U}(\beta_a+b)$ for $b>2$ (see \cite[Lemma 5]{le}), where $\rho_a=\beta_a+i\gamma_a$ denotes the zeros of $\zeta(s)-a$. Calculating
$$
2\pi\sum_{T<\gamma_a<T+U}(\beta_a+b)-2\pi\left(b+\frac{1}{2}\right)N_{\zeta(s)-a}(T,T+U),
$$
we can deduce a generalization of \eqref{zeroright}. As a generalization of \eqref{levin}, the author gave an estimate
 \begin{align}
N_{\zeta^{(k)}(s)-a}^+\left(\frac{1}{2}+\delta;T,T+U\right)+N_{\zeta^{(k)}(s)-a}^-\left(\frac{1}{2}-\delta;T,T+U\right)=O\left(\frac{U\log\log T}{\delta}\right),\label{levin2}
\end{align}
for $\delta=(\log\log T)^2/\log T$, $\alpha>1/2$ and $T^{\alpha}\leq U\leq T$. Let $\rho^{(k)}_a=\beta^{(k)}_a+i\gamma^{(k)}_a$ denote the zeros of $\zeta^{(k)}(s)-a$. Similar to Levinson's proof, in the proof of \eqref{levin2}, the sum  $2\pi\sum_{T<\gamma^{(k)}_a<T+U}(\beta^{(k)}_a+b)$ was estimated for large $b$, so we can also deduce a generalization of \eqref{zeroright} for zeros of $\zeta^{(k)}(s)-a$. Finally, we will see analogues of \eqref{Xlandau}. Steuding \cite{st} proved that for any positive real number $x\neq1$, we have
\begin{align}
\label{T2}\sum_{0<\gamma_a<T}x^{\rho_a}=\left(\alpha(x)-x\Lambda\left(\frac{1}{x}\right)\right)\frac{T}{2\pi}+O(T^{\frac{1}{2}+\varepsilon}),
\end{align}
where $\alpha(x)$ is a coefficient of the series
\begin{align}
\frac{\zeta'(s)}{\zeta(s)-a}=\sum_{d\in2^{-n}\mathbb{N}\ (n\in\mathbb{N})}\frac{\alpha(d)}{d^s}\label{alp}
\end{align}
for $x\in\mathbb{Z}$ and $\alpha(x)=0$ for $x\notin\mathbb{Z}$ if $a\neq1$. If $a=1$,  $\alpha(x)$ is also the coefficient of (\ref{alp}) for $2^nx\in\mathbb{Z}$ with some $n\in\mathbb{N}$. If $2^nx\notin\mathbb{Z}$ for any $n\in\mathbb{N}$,  $\alpha(x)=0$. The author also considered an analogue of \eqref{Xlandau}.
For any $k\in\mathbb{N}$, $a\in\mathbb{C}$ and $x>1$, we have
\begin{align}
\label{Xon}&\sum_{1<\gamma_a^{(k)}<T}x^{\rho_a^{(k)}}\\
&=\begin{cases}
  \displaystyle\frac{T}{2\pi}\sum_{\substack{l\geq0\\n_0,\ldots,n_l\geq2\\x=n_0\cdots n_l}}\frac{(-1)^{k(l+1)}}{a^{l+1}}(\log n_0)^{k+1}(\log n_1\cdots\log n_l)^k+O(\log T)&(a\neq0),\\
  \displaystyle\frac{T}{2\pi}\sum_{\substack{l\geq0\\n_0\geq2\\n_1,\ldots,n_l\geq3\\x=n_0\cdots n_l/2^{l+1}}}\left(\frac{-1}{(\log 2)^k}\right)^{l+1}(\log n_0)^{k+1}(\log n_1\cdots\log n_l)^k+O(\log T)&(a=0).
\end{cases}
\end{align}
If $a\neq0$, the summation of the right-hand side is zero for $x\notin\mathbb{Z}$, and if $a=0$ and $2^nx\notin\mathbb{Z}$ for any $n\in\mathbb{N}$, the summation of the right-hand side is zero. From \eqref{T2} and \eqref{Xon}, $\{\alpha\gamma_a\}$ and $\{\alpha\gamma_a^{(k)}\}$ are uniformly distributed modulo one for any non-zero real $\alpha$, respectively (see \cite{st}\cite{lee}). 

In this paper, we treat a polynomial consisting of $\zeta(s),\zeta'(s),\ldots,\zeta^{(k)}(s)$. For $k,M\in\mathbb{N}$, $d_{lj}\in\mathbb{N}\cup\{0\}$ and $c_j\in\mathbb{C}\setminus\{0\}$, we put
\begin{align*}
F(s):=\sum_{j=1}^M c_j\zeta^{(0)}(s)^{d_{0j}}\zeta^{(1)}(s)^{d_{1j}}\cdots\zeta^{(k)}(s)^{d_{kj}}.
\end{align*} 
We assume that $F(s)$ is not a constant function, that is, at least one of $d_{lj}$ is nonzero. At the International Congress of Mathematicians in 1900, Hilbert pointed out that $\zeta(s)$ does not satisfy any algebraic differential equation of finite order, so $F(s)$ is not a constant function if at least one of $d_{lj}$ is nonzero. We define the first degree of $F(s)$ as
\begin{align*}
\deg_{1}\left(F(s)\right):=\max_{1\leq j\leq M}\sum_{l=0}^k d_{lj}.
\end{align*}
Especially, we have
\begin{align}
\deg_{1}\left(\zeta^{(0)}(s)^{d_{0j}}\zeta^{(1)}(s)^{d_{1j}}\cdots\zeta^{(k)}(s)^{d_{kj}}\right):=\sum_{l=0}^k d_{lj}.\label{deg1}
\end{align}
The second degree of $F(s)$ is defined by
\begin{align*}
&\deg_{2}\left(F(s)\right)\\
&:=\max\left\{\sum_{l=0}^k ld_{lj}\ \Bigg|\ \deg_{1}\left(\zeta^{(0)}(s)^{d_{0j}}\cdots\zeta^{(k)}(s)^{d_{kj}}\right)=\deg_{1}\left(F(s)\right),1\leq j\leq M\right\}.
\end{align*} 
By \eqref{deg1}, the definition $\deg_2$ can be rewritten as
\begin{align*}
\deg_{2}\left(F(s)\right)=\max\left\{\sum_{l=0}^k ld_{lj}\ \Bigg|\ \sum_{l=0}^k d_{lj}=\deg_{1}\left(F(s)\right),1\leq j\leq M\right\}
\end{align*} 
Similar to \eqref{deg1}, we have
\begin{align*}
\deg_{2}\left(\zeta^{(0)}(s)^{d_{0j}}\zeta^{(1)}(s)^{d_{1j}}\cdots\zeta^{(k)}(s)^{d_{kj}}\right):=\sum_{l=0}^k ld_{lj}.
\end{align*}
In order to obtain a functional equation for $F(s)$ (see Lemma \ref{lem1}), we need an assumption
\begin{align}
\sum_{j\in J}c_j\neq0,\label{condition}
\end{align}
where $J$ is defined by
\begin{align*}
J:=\left\{j\in[1,M]\ \Bigg|\  \begin{array}{l}\deg_{1}\left(\zeta^{(0)}(s)^{d_{0j}}\cdots\zeta^{(k)}(s)^{d_{kj}}\right)=\deg_{1}\left(F(s)\right),\\ \deg_{2}\left(\zeta^{(0)}(s)^{d_{0j}}\cdots\zeta^{(k)}(s)^{d_{kj}}\right)=\deg_{2}\left(F(s)\right)\end{array}\right\}.
\end{align*}
We study zeros of $F(s)$. Hereafter we set $s=\sigma+it$ and $\rho_F=\beta_F+i\gamma_F$ denotes the zeros of $F(s)$. The first result gives a zero free region for $F(s)$.
\begin{thm}\label{thm1}
Let $F(s)$ satisfy \eqref{condition}. For $\varepsilon>0$, there exist real numbers $E_{1F\varepsilon}=E_{1F}$ and $E_{2F}$ such that $F(s)\neq0$ on $$\{s\in\mathbb{C}\ |\ \sigma\leq E_{1F},|s+2n|\geq\varepsilon\ (n\in\mathbb{N})\}\cup\{s\in\mathbb{C}\ |\ \sigma\geq E_{2F}\}.$$
\end{thm}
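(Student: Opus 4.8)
The plan is to treat the two half-planes separately: the region $\sigma\ge E_{2F}$ by a Dirichlet series estimate (here $\varepsilon$ plays no role, which is why the statement only claims $E_{2F}=E_{2F}(F)$), and the region $\sigma\le E_{1F}$ by the functional equation of $\zeta$ — this is exactly where the hypothesis \eqref{condition} is used, through the functional equation for $F$ of Lemma \ref{lem1}.

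\emph{The right half-plane.} For $\sigma>1$ every factor is an absolutely convergent Dirichlet series, $\zeta^{(l)}(s)=\sum_{n\ge1}(-\log n)^ln^{-s}$ (the $n=1$ term vanishing for $l\ge1$), so $F(s)=\sum_{n\ge1}a(n)n^{-s}$ with $|a(n)|\ll_\eta n^\eta$ for every $\eta>0$. Since $F$ is not constant (e.g.\ by Hilbert's theorem), $F\not\equiv0$, hence there is a least $n_0$ with $a(n_0)\ne0$, and for large $\sigma$
\[
\bigl|F(s)-a(n_0)n_0^{-s}\bigr|\le\sum_{n>n_0}|a(n)|n^{-\sigma}\ll_\eta\frac{n_0^{1+\eta-\sigma}}{\sigma-1-\eta}<\tfrac12|a(n_0)|n_0^{-\sigma}
\]
once $\sigma$ exceeds a constant $E_{2F}$ depending only on $F$; hence $F(s)\ne0$ for $\sigma\ge E_{2F}$.

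\emph{The left half-plane.} I would differentiate $\zeta(s)=\chi(s)\zeta(1-s)$, $\chi(s)=2^s\pi^{s-1}\sin(\pi s/2)\Gamma(1-s)$, to get $\zeta^{(l)}(s)=\sum_{m=0}^l\binom lm(-1)^{l-m}\chi^{(m)}(s)\zeta^{(l-m)}(1-s)$. As $\sigma\to-\infty$ one has $\zeta^{(r)}(1-s)=\delta_{r,0}+O(2^{\sigma})$ uniformly in $t$, while from $\chi'/\chi=\log(2\pi)+\tfrac\pi2\cot(\pi s/2)-\psi(1-s)$ and Stirling,
\[
\frac{\chi'(s)}{\chi(s)}=-\log|s|+O_\varepsilon(1),\qquad\Bigl(\frac{\chi'}{\chi}\Bigr)^{(i)}(s)=O_\varepsilon(1)\ \ (i\ge1),
\]
uniformly for $\sigma\le-1$ with $|s+2n|\ge\varepsilon$ $(n\in\mathbb{N})$ — the $\varepsilon$-discs being precisely what keeps $s$ away from the poles $s\in2\mathbb{Z}$ of $\cot(\pi s/2)$. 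An induction on $l$ then gives $\chi^{(l)}(s)=\chi(s)(-\log|s|)^l(1+O_\varepsilon(1/\log|s|))$, and combining,
\[
\zeta^{(l)}(s)=\chi(s)(-\log|s|)^l\bigl(1+\eta_l(s)\bigr),\qquad l=0,1,\dots,k,
\]
with $\eta_l(s)\to0$ as $\sigma\to-\infty$, uniformly in $t$ and in $l$.

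\emph{Assembling, and the main difficulty.} Substituting, the $j$-th monomial becomes $\chi(s)^{D_j}(-\log|s|)^{E_j}(1+o(1))$ with $D_j=\sum_l d_{lj}$, $E_j=\sum_l ld_{lj}$; and $\chi$ has no zero or pole in the region, its zeros lying at $0,-2,-4,\dots$ and its poles at $1,3,5,\dots$. Dividing $F(s)$ by $\chi(s)^{\deg_1F}(-\log|s|)^{\deg_2F}$ therefore leaves $\sum_{j\in J}c_j$ together with, for each $j\notin J$, a term carrying an extra factor $\chi(s)^{-1}$ (when $D_j<\deg_1F$) or $(\log|s|)^{-1}$ (when $D_j=\deg_1F$ and $E_j<\deg_2F$). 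By \eqref{condition}, $\sum_{j\in J}c_j\ne0$, so the proof reduces to making all these extra factors uniformly small once $E_{1F}$ is negative enough. The $(\log|s|)^{-1}$ case is immediate; the $\chi(s)^{-1}$ case, possibly multiplied by a bounded power of $\log|s|$, needs a uniform lower bound $|\chi(s)|\ge|s|$ throughout the region, which comes from Stirling away from the discs and, near a disc, from the fact that $\chi$ has a simple zero at $-2n$ with $\chi'(-2n)=\frac{(-1)^n(2n)!}{2(2\pi)^{2n}}$, whose modulus is enormous because $2n\ge|E_{1F}|$ for every disc meeting the region. This uniformity — in $t$, and right up to the boundary of the $\varepsilon$-discs around the trivial zeros $s=-2n$ — is the crux of the argument and the reason those discs cannot be dropped; granting it, one obtains $\bigl|F(s)\big/\bigl(\chi(s)^{\deg_1F}(-\log|s|)^{\deg_2F}\bigr)-\sum_{j\in J}c_j\bigr|<\tfrac12\bigl|\sum_{j\in J}c_j\bigr|$ on the region, so $F(s)\ne0$ there.
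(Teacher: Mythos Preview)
Your argument is correct and follows the same route as the paper. The paper simply invokes Lemma~\ref{lem1} (itself built on \cite[Lemma~2.1]{on}) to write $F(1-s)=A_1(s)+A_2(s)$ with $A_1(s)=\bigl(\sum_{j\in J}c_j\bigr)(-\log s)^{\deg_2 F}\{2(2\pi)^{-s}\Gamma(s)\cos(\pi s/2)\,\zeta(s)\}^{\deg_1 F}$ and $A_2=O(|A_1|/|\log s|)$, then observes $A_1\ne0$ in the region; your derivation of $\zeta^{(l)}(s)=\chi(s)(-\log|s|)^l(1+o(1))$ by differentiating the functional equation, your assembly into $F(s)$, and your lower bound on $|\chi(s)|$ are exactly the content of that lemma's proof, so you have unpacked what the paper black-boxes.
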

The second result gives zeros near the real axis. We define the region $\mathcal{C}_{n,\varepsilon}$ as
\begin{align*}
\mathcal{C}_{n,\varepsilon}:=\{s\in\mathbb{C}\ |\ |s+2n|<\varepsilon\}.
\end{align*}
\begin{thm}\label{thm3}
Let $F(s)$ satisfy \eqref{condition}. For any $\varepsilon>0$, there exists a positive integer $N=N_{F,\varepsilon}$ such that $F(s)$ has exactly $\deg_1(F(s))$ zeros in $\mathcal{C}_{n,\varepsilon}$ for each $n\geq N$. 
\end{thm}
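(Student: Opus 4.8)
The plan is to exploit the functional equation for $F(s)$ (Lemma \ref{lem1}) to obtain, in a neighbourhood of the point $s=-2n$, an approximate factorization of $F(s)$ whose dominant term is an explicit function with exactly $\deg_1(F(s))$ zeros there; then a routine application of Rouch\'e's theorem on the circle $|s+2n|=\varepsilon$ finishes the argument. Concretely, each factor $\zeta^{(l)}(s)$ should, via the functional equation $\zeta(s)=\chi(s)\zeta(1-s)$ and repeated differentiation, be written in the left half-plane as $\chi(s)$ times an expression dominated by the appropriate power of $\log$; near $s=-2n$ the relevant feature is the behaviour of $\chi(s)$ (equivalently of the $\Gamma$-factor), which has a simple zero at each negative even integer. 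Thus in $\mathcal{C}_{n,\varepsilon}$ one expects $\zeta^{(l)}(s)$ to look like $\chi(s)$ times something analytic and nonvanishing (up to smaller-order terms), so that the monomial $\zeta^{(0)}(s)^{d_{0j}}\cdots\zeta^{(k)}(s)^{d_{kj}}$ behaves like $\chi(s)^{\deg_1(\cdot)}$ times a factor whose size is governed by $\deg_2$; summing over $j$, the dominant contribution comes exactly from $j\in J$, and condition \eqref{condition} guarantees the coefficient $\sum_{j\in J}c_j$ of this dominant term is nonzero.

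In more detail, the key steps are as follows. First I would record the expansion of $\zeta^{(l)}(s)$ near $s=-2n$ coming from the functional equation: write $\zeta^{(l)}(s) = \chi(s)\, g_l(s) + (\text{error})$ where $g_l$ is built from derivatives of $\chi(1-s)^{-1}$ and of $\zeta(1-s)$ evaluated near $1+2n$; the point is that $g_l(-2n)$ is, to leading order as $n\to\infty$, a nonzero constant times $(\log 2n)^l$ or a comparable quantity, and in any case the ratios $g_l(s)/g_0(s)$ are controlled. Second, I would factor $\chi(s)$ out of every occurrence of a $\zeta^{(l)}$ in each monomial, obtaining $F(s) = \chi(s)^{\deg_1(F)} \bigl( \sum_{j\in J} c_j\, h_j(s) + (\text{lower order in }n) \bigr)$, where the lower-order terms come both from the monomials with $j\notin J$ (smaller by a factor that is a negative power of $\log n$, using the definitions of $\deg_1$ and $\deg_2$) and from the error terms in the functional-equation expansion. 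Third, on the circle $|s+2n|=\varepsilon$ I would show that the bracketed expression is bounded below in absolute value, uniformly, by a positive constant multiple of its dominant term (here $\sum_{j\in J}c_j\neq0$ is used), once $n\ge N_{F,\varepsilon}$; meanwhile $\chi(s)^{\deg_1(F)}$ contributes exactly $\deg_1(F)$ zeros inside $\mathcal{C}_{n,\varepsilon}$, since $\chi$ has a simple zero at $-2n$ and no other zeros or poles nearby (for $n$ large enough that $\varepsilon<1$). Fourth, Rouch\'e's theorem applied to $F(s)$ versus $\chi(s)^{\deg_1(F)}\cdot(\text{the dominant constant})$ yields that $F$ has exactly $\deg_1(F)$ zeros in $\mathcal{C}_{n,\varepsilon}$.

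The main obstacle I anticipate is making the comparison between the $j\in J$ terms and the $j\notin J$ terms genuinely uniform on the circle $|s+2n|=\varepsilon$: one must check that after dividing out $\chi(s)^{\deg_1(F)}$, a monomial with $\sum_l d_{lj}<\deg_1(F)$ carries an extra factor that is $O(|\chi(s)|)$ hence small, while a monomial with $\sum_l d_{lj}=\deg_1(F)$ but $\sum_l l d_{lj}<\deg_2(F)$ carries an extra factor that is a negative power of $\log n$ hence also small — and all of this while $s$ ranges over a circle of radius $\varepsilon$, not just at the centre, so the asymptotics in $n$ must be shown to hold locally uniformly. Controlling the error terms from the functional equation (the tails of the Dirichlet series for $\zeta(1-s)$ and its derivatives, evaluated near $1+2n$) in the same uniform sense is the accompanying technical point, but this should follow from standard estimates. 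Once these uniformities are in hand, the Rouch\'e step and the count of zeros of $\chi(s)^{\deg_1(F)}$ are immediate.
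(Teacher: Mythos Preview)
Your proposal is correct and follows essentially the same route as the paper: the paper's Lemma~\ref{lem1} is precisely the uniform functional-equation expansion you describe (written in the variable $1-s$ rather than $s$), and the proof of Theorem~\ref{thm3} then reduces to a three-line Rouch\'e argument comparing $F(1-s)$ with the main term $A_1(s)$ on the circle $|s-(2n-1)|=\varepsilon$, where $A_1$ has a zero of order exactly $\deg_1(F)$ coming from $\cos(\pi s/2)$. The uniformity concerns you raise (dominance of the $j\in J$ terms, control of the error on the whole circle) are exactly what Lemma~\ref{lem1} already packages via the factor $(1+O(1/|\log s|))$, so once that lemma is available the obstacles you anticipate have been handled.
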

Spira \cite{sp1} proved that there is a $C_k$ such that $\zeta^{(k)}(s)$ has exactly one real zero in $(-1-2n,1-2n)$ with $1-2n\leq C_k$. Levinson \cite{le} pointed out that $\zeta(s)=a$ has exactly one root in the neighborhood of $s=-2n$ for large $n$. Theorem \ref{thm3} is a generalization of these results. The third theorem counts the number of zeros of $F(s)$ in $1<t<T$. To state the theorem, we define $n_F$ as follows;
$$
F(s)=\sum_{n=n_F}^\infty \frac{\eta_n}{n^s}\qquad(\eta_{n_F}\neq0).
$$
Note that $F(s)$ can be expressed as a Dirichlet series since every $\zeta^{(j)}(s)$ has the Dirichlet series expression $\sum_{n\geq1}(-\log n)^j/n^s$. (In this paper, we define $(-\log 1)^0=1$.)
\begin{thm}\label{1}
Let $F(s)$ satisfy \eqref{condition}. For large $T$, we have
\begin{align*}
N_{F(s)}(1,T)=\frac{\deg_1(F(s))T}{2\pi}\log\frac{T}{2\pi e}-\frac{T}{2\pi}\log n_F+O(\log T).
\end{align*}
\end{thm}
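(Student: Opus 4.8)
The plan is to apply the argument principle on a rectangle and recover the three main terms from the behaviour of $F$ on its three non-trivial sides, just as in the classical proof of the Riemann--von Mangoldt formula \eqref{RVF1}. Fix $\varepsilon\in(0,1)$ and, by Theorem~\ref{thm1}, pick $E_{1F}<E_{2F}$ so that $F(s)\neq0$ for $\sigma\ge E_{2F}$ and for $\sigma\le E_{1F}$, $t\ge1$ (the exceptional disks $\mathcal C_{n,\varepsilon}$ lie in $|t|<\varepsilon<1$, hence do not meet $t\ge1$). Then every zero of $F$ with $1<t<T$ has $E_{1F}<\sigma<E_{2F}$, and $F$ has no zeros on the two vertical segments $\sigma=E_{1F},E_{2F}$, $1\le t\le T$. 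Replacing $1$ and $T$ by nearby ordinates free of zeros of $F$ (the error absorbed once $N_F(T,T+1)=O(\log T)$ is known), the argument principle gives $2\pi N_{F(s)}(1,T)=\Delta_{\mathcal R}\arg F(s)$, where $\mathcal R$ is the positively oriented rectangle with vertices $E_{2F}+i$, $E_{2F}+iT$, $E_{1F}+iT$, $E_{1F}+i$ and $\Delta_{\mathcal R}$ denotes the total change of argument along $\mathcal R$.

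On the right side $\sigma=E_{2F}$ I would write $F(s)=\eta_{n_F}n_F^{-s}\bigl(1+\sum_{n>n_F}(\eta_n/\eta_{n_F})(n_F/n)^{s}\bigr)$; after enlarging $E_{2F}$ the tail has modulus $<\tfrac12$ on this line, so the bracket stays in $\{\Re>\tfrac12\}$ and contributes $O(1)$ to $\arg F$, while $\arg(\eta_{n_F}n_F^{-s})$ changes by $-(T-1)\log n_F=-T\log n_F+O(1)$. On the bottom side $t=1$, $F$ is a fixed non-vanishing continuous function, so $\arg F$ changes by $O(1)$. On the left side $\sigma=E_{1F}$ I would invoke the functional equation of Lemma~\ref{lem1}: enlarging $-E_{1F}$ if necessary, it writes $F(s)$ on this line as $\chi(s)^{\deg_1(F)}$ --- $\chi$ being the factor in $\zeta(s)=\chi(s)\zeta(1-s)$ --- times a factor whose total argument variation over $1\le t\le T$ is $O(1)$, the latter because that factor is, up to a positive real quantity, close to the constant $\sum_{j\in J}c_j\neq0$, the non-vanishing being exactly hypothesis \eqref{condition}. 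Stirling's formula gives $\Delta\arg\chi(s)=T\log\tfrac{T}{2\pi}-T+O(\log T)$ along $\sigma=E_{1F}$ from $t=T$ down to $t=1$, hence a contribution $\deg_1(F)\bigl(T\log\tfrac{T}{2\pi}-T\bigr)+O(\log T)$. Using $\log\tfrac{T}{2\pi}-1=\log\tfrac{T}{2\pi e}$, everything except the top side sums to $\deg_1(F)\,T\log\tfrac{T}{2\pi e}-T\log n_F+O(\log T)$.

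It remains to bound the top side, and I expect $\Delta\arg F(\sigma+iT)=O(\log T)$ for $E_{1F}\le\sigma\le E_{2F}$ to be the main obstacle. I would argue in the standard way: up to $O(1)$, the variation of $\arg F$ along this segment is at most $\pi$ times the number of zeros of $\Re F(\sigma+iT)$ on it, and those are the real zeros in $[E_{1F},E_{2F}]$ of the analytic function $H(z):=\tfrac12\{F(z+iT)+\overline{F(\bar z+iT)}\}$, which is real on the real axis. A Jensen-type inequality on a disk containing $[E_{1F},E_{2F}]$ bounds this zero count by $\log\bigl(\max|H|\big/|H(z_0)|\bigr)$ for a suitable centre; taking $\Re z_0$ large makes $|H(z_0)|\gg1$ uniformly in $T$ (leading term of the Dirichlet series), while $\max|H|\ll T^{A}$ for some $A>0$ follows from convexity bounds for the $\zeta^{(l)}(s)$, so the count is $O(\log T)$. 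The same Jensen argument on disks of bounded radius gives $N_F(T,T+1)=O(\log T)$, justifying the perturbation above. Assembling the four sides yields $2\pi N_{F(s)}(1,T)=\deg_1(F)\,T\log\tfrac{T}{2\pi e}-T\log n_F+O(\log T)$, which is the assertion. The delicate points are thus the uniform validity of the left-side asymptotics in both $t$ and $|E_{1F}|$ through Lemma~\ref{lem1}, and the growth bound together with a point of size $\gg1$ on the line $t=T$ feeding the top-side estimate.
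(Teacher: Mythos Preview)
Your argument is correct and shares the paper's overall architecture: argument principle on the rectangle $[E_{1F},E_{2F}]\times[1,T]$, with the right side giving $-T\log n_F$ from the Dirichlet series (Lemma~\ref{lem2.2}), the left side giving $\deg_1(F)\,T\log\frac{T}{2\pi e}$ via Lemma~\ref{lem1} and Stirling, the bottom contributing $O(1)$, and the top $O(\log T)$. The one substantive difference is how you handle the top segment. The paper computes $\Im\int F'/F\,ds$ there by first developing the Hadamard product (Lemma~\ref{LEM4}) and the partial-fraction expansion $F'/F=\sum_{|\gamma_F-t|<1}(s-\rho_F)^{-1}+O(\log t)$ (Lemma~\ref{LEM3.2}), then deforming the path around each of the $O(\log T)$ nearby zeros. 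You instead bound $\Delta\arg F(\sigma+iT)$ directly by counting real zeros of $H(z)=\tfrac12\bigl(F(z+iT)+\overline{F(\bar z+iT)}\bigr)$ via Jensen's formula. Both routes are standard and both ultimately rest on the same Jensen input ($N_F(T,T+1)\ll\log T$, Lemma~\ref{LEM3.1}); your approach is more self-contained for this theorem alone, while the paper's $F'/F$ machinery is an investment repaid in the later sections. Incidentally, the paper does use exactly your $H_T$-plus-Jensen device, but only later in Lemma~\ref{LEM5.1}, so both techniques appear in the paper.
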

The fourth and the fifth results describe the real part of zeros of $F(s)$.
\begin{thm}\label{thm4}
Let $F(s)$ satisfy \eqref{condition}. For large $T$, $\alpha>1/2$ and $T^\alpha\leq U\leq T$, we have
\begin{align}
\notag &2\pi\sum_{T<\gamma_F<T+U}\left(\beta_F-\frac{1}{2}\right)\\
&\qquad=\deg_2(F(s))U\log\log T+U\log\left|\frac{\sum_{j\in J}c_j}{\eta_{n_F}/n_F^{1/2}}\right|+O\left(\frac{U}{\log T}\right).\label{leftright}
\end{align}
\end{thm}
\begin{thm}\label{thm5}
Let $F(s)$ satisfy \eqref{condition}. For large $T$, $\alpha>1/2$ and $T^\alpha\leq U\leq T$, we have
 \begin{align}
N_{F(s)}^+\left(\frac{1}{2}+\delta;T,T+U\right)+N_{F(s)}^-\left(\frac{1}{2}-\delta;T,T+U\right)=O\left(\frac{U\log\log T}{\delta}\right),
\end{align}
for $\delta>0$ uniformly.
\end{thm}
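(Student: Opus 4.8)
The plan is to run the Littlewood-lemma argument of Levinson and Montgomery, using Theorem~\ref{thm1} and Theorem~\ref{thm4} as black boxes and supplying the one extra ingredient --- an upper bound for $\int_T^{T+U}\log|F(1/2+it)|\,dt$ --- that does not deteriorate as $\deg_1(F)$ grows.

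First I would note that, for $T$ large, every zero $\rho_F=\beta_F+i\gamma_F$ of $F$ with $T<\gamma_F<T+U$ satisfies $E_{1F}<\beta_F<E_{2F}$: since $|\rho_F+2n|\ge\gamma_F>T\ge\varepsilon$ for every $n\in\mathbb{N}$, the first region in Theorem~\ref{thm1} gives $\beta_F>E_{1F}$, while the second gives $\beta_F<E_{2F}$. Fix $\sigma_1\ge E_{2F}$ (so $F(s)\neq0$ for $\Re(s)\ge\sigma_1$), and let $\mathcal{R}=\{\,1/2\le\sigma\le\sigma_1,\ T\le t\le T+U\,\}$; $F$ is holomorphic on a neighbourhood of $\mathcal{R}$ since its only pole, at $s=1$, lies far below. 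Then $\mathcal{R}$ contains precisely the zeros of $F$ with $T<\gamma_F<T+U$ and $\beta_F>1/2$, and applying Littlewood's lemma to $F$ on $\mathcal{R}$ (the zeros of $F$ on $\partial\mathcal{R}$ being treated in the standard way, at a cost of $O_F(\log T)$) gives
\begin{align*}
B_+:=2\pi\sum_{\substack{T<\gamma_F<T+U\\ \beta_F>1/2}}\left(\beta_F-\tfrac12\right)=\int_T^{T+U}\log\left|F\!\left(\tfrac12+it\right)\right|\,dt-\int_T^{T+U}\log\left|F(\sigma_1+it)\right|\,dt+O_F(U),
\end{align*}
the $O_F(U)$ absorbing the two $\arg F$-integrals along the horizontal edges (the variation of $\arg F$ along a horizontal segment is $O_F(\log T)$, while $\arg F$ on $\Re(s)=\sigma_1$ contributes a term of size $O_F(U)$ coming from $F'(s)/F(s)\to-\log n_F$ as $\Re(s)\to\infty$), exactly as in the Riemann--von Mangoldt-type computation underlying Theorem~\ref{1}.

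The main step is the estimate $\int_T^{T+U}\log|F(1/2+it)|\,dt\ll_F U\log\log T$, where the point is to \emph{not} bound $|F|$ by a single monomial, which would force a $2\deg_1(F)$-th moment on the critical line. Instead, from $|F(1/2+it)|\le\sum_{j}|c_j|\prod_{l}|\zeta^{(l)}(1/2+it)|^{d_{lj}}$ together with $\sum_{l}d_{lj}\le\deg_1(F)$ one obtains
\begin{align*}
\log^+\!\left|F\!\left(\tfrac12+it\right)\right|\le\log^+\!\Bigl(\sum_{j=1}^{M}|c_j|\Bigr)+\deg_1(F)\sum_{l=0}^{k}\log^+\!\left|\zeta^{(l)}\!\left(\tfrac12+it\right)\right|.
\end{align*}
For each $l$, Jensen's inequality and $\max(1,x)\le(1+x^2)^{1/2}$ give $\int_T^{T+U}\log^+|\zeta^{(l)}(1/2+it)|\,dt\le\tfrac{U}{2}\log\bigl(1+\tfrac1U\int_T^{T+U}|\zeta^{(l)}(1/2+it)|^2\,dt\bigr)$, and the standard mean value bound $\int_T^{T+U}|\zeta^{(l)}(1/2+it)|^2\,dt\ll_l U(\log T)^{2l+1}$ --- valid since $U\ge T^\alpha$ with $\alpha>1/2$ --- turns this into $\ll_l U\log\log T$. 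Hence $\int_T^{T+U}\log|F(1/2+it)|\,dt\le\int_T^{T+U}\log^+|F(1/2+it)|\,dt\ll_F U\log\log T$. For the other integral, choosing $\sigma_1$ large makes $|F(\sigma_1+it)|\asymp_F n_F^{-\sigma_1}$ uniformly in $t$, so $\int_T^{T+U}\log|F(\sigma_1+it)|\,dt=O_F(U)$. Together these give $B_+\ll_F U\log\log T$.

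To finish, combine with Theorem~\ref{thm4}. Any zero with $\beta_F>1/2+\delta$ contributes more than $\delta$ to $B_+/(2\pi)$, so
\begin{align*}
N_{F(s)}^+\!\left(\tfrac12+\delta;T,T+U\right)\le\frac{B_+}{2\pi\delta}\ll_F\frac{U\log\log T}{\delta}.
\end{align*}
Putting $A:=2\pi\sum_{T<\gamma_F<T+U}(\beta_F-\tfrac12)$, Theorem~\ref{thm4} gives $A=\deg_2(F)\,U\log\log T+U\log\left|\tfrac{\sum_{j\in J}c_j}{\eta_{n_F}/n_F^{1/2}}\right|+O(U/\log T)$, so $|A|\ll_F U\log\log T$; and since zeros with $\beta_F=\tfrac12$ contribute nothing,
\begin{align*}
2\pi\sum_{\substack{T<\gamma_F<T+U\\ \beta_F<1/2}}\left(\tfrac12-\beta_F\right)=B_+-A\ll_F U\log\log T.
\end{align*}
As a zero with $\beta_F<1/2-\delta$ contributes more than $\delta$ to the left-hand side divided by $2\pi$, this yields $N_{F(s)}^-(\tfrac12-\delta;T,T+U)\le(B_+-A)/(2\pi\delta)\ll_F U\log\log T/\delta$; adding the two bounds proves the theorem, uniformly in $\delta>0$ (for $\delta>E_{2F}-\tfrac12$, resp.\ $\delta>\tfrac12-E_{1F}$, the relevant count is simply $0$). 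The only genuinely delicate step is the critical-line bound for $\int\log|F|$: passing to $\log^+$ of the product makes the whole estimate depend only on the second moments of the individual $\zeta^{(l)}$, each contributing a harmless power of $\log T$, so it is insensitive to $\deg_1(F)$; the remaining ingredients --- Littlewood's lemma, these mean value theorems, and the bookkeeping with Theorem~\ref{thm4} --- are routine.
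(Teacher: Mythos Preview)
Your argument is correct and follows the same overall architecture as the paper: Littlewood's lemma on the rectangle with left edge $\Re(s)=\tfrac12$ gives $B_+=\int_T^{T+U}\log|F(\tfrac12+it)|\,dt+O_F(U)$, the critical-line integral is bounded by $O(U\log\log T)$ via Jensen's inequality and moment bounds, the bound on $N_{F(s)}^+$ follows by dropping zeros, and the bound on $N_{F(s)}^-$ follows by combining $B_+$ with the signed sum $A$ (the paper packages exactly these steps as Lemmas~\ref{LEM5.1}--\ref{thm5-2}, using Lemma~\ref{LEM5.4} in place of your appeal to Theorem~\ref{thm4}, which is equivalent since the latter is derived from the former). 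The one genuine point of divergence is how the critical-line integral is controlled. You pass to $\log^+|F|\le\log^+(\sum|c_j|)+\deg_1(F)\sum_l\log^+|\zeta^{(l)}|$ and invoke directly the short-interval second moments $\int_T^{T+U}|\zeta^{(l)}(\tfrac12+it)|^2\,dt\ll_l U(\log T)^{2l+1}$; the paper instead bounds $|F|\le C\bigl(1+\sum_l|\zeta^{(l)}|\bigr)^{\deg_1(F)}$, introduces auxiliary exponents $v_0,\dots,v_k$, and uses H\"older together with the Ki--Lee estimate $\int|\zeta^{(j)}/\zeta|^{1/(2j)}\ll U\sqrt{\log T}$ and Titchmarsh's $\int|\zeta|^2\ll U\log T$ to obtain $\int|\zeta^{(j)}|^{v_j}\ll U\log T$ with $v_j=2/(4j+1)$. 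Your route is shorter and conceptually cleaner, at the price of taking the short-interval second moment of $\zeta^{(l)}$ as known; the paper's route is more circuitous but reduces everything to two citable inputs.
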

From Theorem \ref{thm4}, if $\deg_2(F(s))$ is positive,  zeros of $F(s)$ would be mainly located in the right half plane $\Re(s)>1/2$. If $\deg_2(F(s))=0$, the distribution of the zeros depends on $|\sum_{j\in J}c_j|$ and $|\eta_{n_F}/n_F^{1/2}|$. If $|\sum_{j\in J}c_j|>|\eta_{n_F}/n_F^{1/2}|$, the zeros would be mainly located in $\Re(s)>1/2$.  If $|\sum_{j\in J}c_j|<|\eta_{n_F}/n_F^{1/2}|$, the zeros would be mainly located in $\Re(s)<1/2$. Finally if $|\sum_{j\in J}c_j|=|\eta_{n_F}/n_F^{1/2}|$, the main terms of \eqref{leftright} vanish, so we can not obtain the sign of $\sum(\beta_F-1/2)$.

By Theorem \ref{thm5}, most of zeros of $F(s)$ are close to the critical line. Although ``nontrivial zeros of $F(s)$" are generally not on the critical line, $F(s)$ has the property of the Riemann zeta function whose nontrivial zeros are close to the  critical line. (Here ``nontrivial zeros of $F(s)$" means that the zeros of $F(s)$ with $|\gamma_F|>1$.)

Next, we see the main results on the imaginary parts of zeros of $F(s)$. 
\begin{thm}\label{6}
Let $F(s)$ satisfy \eqref{condition}. For real number $x>1$ and large $T$, we have
\begin{align*}
\sum_{1<\gamma_{F}<T}x^{\rho_F}=\left(\alpha_{(F'/F)(s)}(x)\right)\frac{T}{2\pi}+O(\log T),
\end{align*}
where $\alpha_{(F'/F)(s)}(x)$ is a coefficient of the series
\begin{align}
\frac{F'}{F}(s)=\sum_{d\in n_F^{-n}\mathbb{N}\ (n\in\mathbb{N})}\frac{\alpha_{(F'/F)(s)}(d)}{d^s}\label{alpFF}
\end{align}
for $x\in n_F^{-n}\mathbb{N}$ with some  $n\in\mathbb{N}$ and $\alpha_{(F'/F)(s)}(x)=0$ for $x\notin  n_F^{-n}\mathbb{N}$.
\end{thm}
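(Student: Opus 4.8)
The plan is to carry out the contour-integral argument of Landau \cite{la}, in the refined form used by Steuding \cite{st} and in \cite{on}, applied to $\frac{F'}{F}(s)x^s$ over a long rectangle, and to read off the main term from the rightmost side.

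Fix a real number $x>1$ and an $\varepsilon\in(0,1)$. One first localises the relevant zeros using Theorem \ref{thm1}: since $|s+2n|\ge|t|>1>\varepsilon$ whenever $t>1$, the zero-free region shows that every zero of $F$ with $\gamma_F>1$ satisfies $E_{1F}<\beta_F<E_{2F}$, while every zero with $\beta_F\le E_{1F}$ lies in some $\mathcal{C}_{n,\varepsilon}$ and hence has $|\gamma_F|<\varepsilon<1$, so it is irrelevant to $\sum_{1<\gamma_F<T}$. Next pick $b>E_{2F}$ so large that, writing $F(s)=\eta_{n_F}n_F^{-s}(1+H(s))$ with $H(s)=\sum_{n>n_F}(\eta_n/\eta_{n_F})(n_F/n)^s$, one has $|H(s)|<1/2$ for $\Re s\ge b$; then
$$\frac{F'}{F}(s)=-\log n_F+\frac{H'(s)}{1+H(s)}=-\log n_F+\sum_{j\ge0}(-1)^jH(s)^jH'(s)=\sum_d\frac{\alpha_{(F'/F)(s)}(d)}{d^s}$$
converges absolutely on $\Re s\ge b$, the series being supported on $\bigcup_n n_F^{-n}\mathbb{N}$, which is \eqref{alpFF}. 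Finally, after replacing $T$ by a nearby $T'\in(T,T+1)$ on which $F$ has no zero within distance $\gg1/\log T$ of $\Im s=T'$ — a change costing only $O(\log T)$ in $\sum_{1<\gamma_F<T}x^{\rho_F}$ by Theorem \ref{1} — and assuming, as we may, that $F$ does not vanish on $\Im s=1$ (otherwise an $O(1)$ correction), the argument principle gives, for every $a<E_{1F}$,
$$\sum_{1<\gamma_F<T}x^{\rho_F}=\frac{1}{2\pi i}\left(\int_{a+i}^{b+i}+\int_{b+i}^{b+iT'}+\int_{b+iT'}^{a+iT'}+\int_{a+iT'}^{a+i}\right)\frac{F'}{F}(s)x^s\,ds,$$
with the enclosed zero set independent of $a$ by the localisation above; so one may let $a\to-\infty$.

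For the three non-main sides one uses the functional equation (Lemma \ref{lem1}) together with $|x^s|=x^\sigma$ and $x>1$. On vertical lines far to the left Lemma \ref{lem1} yields $\frac{F'}{F}(\sigma+it)\ll\log(|\sigma|+|t|+2)$ and $\log|F(\sigma+it)|\ll\log(|\sigma|+|t|+2)$; hence the left side is $\ll x^a\int_1^{T'}\log(|a|+t)\,dt\to0$ as $a\to-\infty$. The bottom side tends to $\int_{-\infty}^b x^\sigma\frac{F'}{F}(\sigma+i)\,d\sigma$, which is $O(1)$ (integrate by parts and use that $x^\sigma$ decays super-exponentially while $\log|F(\sigma+i)|$ grows only logarithmically, $F$ being zero-free on $\Im s=1$). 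The top side tends to $-\int_{-\infty}^b x^\sigma\frac{F'}{F}(\sigma+iT')\,d\sigma$; integrating by parts this is $\ll x^b|\log F(b+iT')|+|\log x|\int_{-\infty}^b x^\sigma|\log F(\sigma+iT')|\,d\sigma$, and on $\Im s=T'$ one has $|\log|F(\sigma+iT')||\ll\log T$ (by the choice of $T'$ inside the strip, by Lemma \ref{lem1} outside it) and $|\arg F(\sigma+iT')|\ll\log T$ uniformly in $\sigma$ — the Backlund–Littlewood estimate already underlying Theorem \ref{1} — so the top side is $O(\log T)$.

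It remains to evaluate the right side. Integrating the absolutely convergent Dirichlet series term by term,
$$\frac{1}{2\pi i}\int_{b+i}^{b+iT'}\frac{F'}{F}(s)x^s\,ds=\sum_d\alpha_{(F'/F)(s)}(d)\cdot\frac{1}{2\pi i}\int_{b+i}^{b+iT'}\left(\frac{x}{d}\right)^s\,ds.$$
The term $d=x$ — which occurs precisely when $x\in\bigcup_n n_F^{-n}\mathbb{N}$, with $\alpha_{(F'/F)(s)}(x)=0$ otherwise — contributes $\alpha_{(F'/F)(s)}(x)\frac{T'-1}{2\pi}=\alpha_{(F'/F)(s)}(x)\frac{T}{2\pi}+O(1)$, the asserted main term. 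Each term with $d\ne x$ contributes $\ll|\alpha_{(F'/F)(s)}(d)|(x/d)^b/|\log(x/d)|$, a bound independent of $T'$, and the sum over such $d$ is $O(1)$ for $b$ large: split off $d\notin[x/2,2x]$ (handled by absolute convergence of $\sum_d|\alpha_{(F'/F)(s)}(d)|d^{-b}$) and treat the finitely controlled remainder exactly as for \eqref{T2} and \eqref{Xon} in \cite{st},\cite{on}. Adding the four sides gives the theorem. The main obstacle is not the contour bookkeeping but the two analytic inputs on $\frac{F'}{F}$ — absolute convergence of its Dirichlet series on a right half-plane with support exactly $\bigcup_n n_F^{-n}\mathbb{N}$ (so that $\alpha_{(F'/F)(s)}(x)$ can be read off), and the logarithmic bounds for $\frac{F'}{F}$, $\log|F|$ and $\arg F$ on horizontal and far-left vertical lines — both resting on Lemma \ref{lem1} and forming the technical core shared with Theorems \ref{thm1}–\ref{thm4}; a secondary delicate point is the off-diagonal sum over $d\ne x$, whose support is dense in $(0,\infty)$, so convergence near $d=x$ must be extracted from the rapid decay of $\alpha_{(F'/F)(s)}(d)$ for $d$ with many-factor representations, as in the earlier analogues.
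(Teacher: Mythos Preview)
Your contour argument and the evaluation of the right-hand vertical integral agree with the paper's. The difference lies in the horizontal sides: the paper bounds $\int x^s\frac{F'}{F}(s)\,ds$ directly, without integrating by parts. It splits each horizontal segment at an abscissa $-c'<E_{1F}$; on the far-left piece Lemma~\ref{lem7} gives $\frac{F'}{F}(\sigma+iT)=O(\log(|\sigma|+T))$, whence $\int_{-\infty}^{-c'}x^\sigma\bigl|\frac{F'}{F}(\sigma+iT)\bigr|\,d\sigma=O(\log T)$; on the finite strip piece it invokes the partial-fraction expansion of Lemma~\ref{LEM3.2} and the semicircle path-deformation already used for $J_3$ in the proof of Theorem~\ref{1}, obtaining $O(\log T)$ with no adjustment of $T$. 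This is shorter and avoids any appeal to bounds on $\log F$ or $\arg F$.

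Your integration-by-parts route can be made to work, but two of your intermediate assertions are false as stated. The quantities $|\log|F(\sigma+iT')||$ and $|\arg F(\sigma+iT')|$ are \emph{not} $O(\log T)$ uniformly in $\sigma$: the $\Gamma$-factor in Lemma~\ref{lem1} forces $\log|F(\sigma+iT')|\asymp|\sigma|\log(|\sigma|+T)$ as $\sigma\to-\infty$, and integrating the Lemma~\ref{lem7} bound on $\Im\frac{F'}{F}$ along the horizontal only yields $|\arg F(\sigma+iT')|\ll|\sigma|\log(|\sigma|+T)$. The exponential (not ``super-exponential'') decay of $x^\sigma$ still makes $\int_{-\infty}^{b}x^\sigma|\log F(\sigma+iT')|\,d\sigma=O(\log T)$, so your conclusion survives once these growth rates are inserted correctly; the same repair is needed on the bottom side. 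One minor remark: your worry that the support of $\alpha_{(F'/F)(s)}$ is dense near $x$ is unfounded --- every $d$ in the support (other than $d=1$) has the form $n_0\cdots n_j/n_F^{j+1}$ with each $n_i>n_F$, hence $d\ge((n_F+1)/n_F)^{j+1}$, which bounds $j$ once $d\le 2x$; thus only finitely many $d\le 2x$ occur and the off-diagonal sum is routine, exactly as the paper treats it.
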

When $F(s)=\zeta^{(k)}(s)-a$, the series \eqref{alpFF} is given in \cite[Lemma 4.1]{on}. Similar to the proof of \cite[Lemma 4.1]{on}, we can also give the series \eqref{alpFF} for any $F(s)$. From this theorem, we can deduce the following corollary.

\begin{cor}\label{cor7}
Let $F(s)$ satisfy \eqref{condition}. The sequence $\{\alpha\gamma_F\}_{\gamma_F>1}$ is uniformly distributed modulo one for any non-zero real $\alpha$.
\end{cor}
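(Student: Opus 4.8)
The plan is to derive the equidistribution of $\{\alpha\gamma_F\}_{\gamma_F>1}$ from Theorem \ref{6} via Weyl's criterion, following the classical argument of Hlawka and Steuding. By Weyl's criterion it suffices to show that for every integer $m\neq0$,
\begin{align*}
\frac{1}{N_{F(s)}(1,T)}\sum_{1<\gamma_F<T}e^{2\pi i m\alpha\gamma_F}\longrightarrow 0\qquad(T\to\infty).
\end{align*}
Setting $x=e^{2\pi m\alpha}$ when $m\alpha>0$ (and treating $m\alpha<0$ by taking complex conjugates, so that one reduces to $x>1$ in all cases), we have $e^{2\pi i m\alpha\gamma_F}=x^{i\gamma_F}=x^{\rho_F}x^{-\beta_F}$. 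First I would use Theorem \ref{thm5} (most zeros cluster near $\Re(s)=1/2$) together with the zero-free region from Theorem \ref{thm1} and the counting estimate of Theorem \ref{1} to replace $x^{-\beta_F}$ by $x^{-1/2}$ up to an admissible error: splitting the sum according to whether $|\beta_F-1/2|\leq\delta$ or not, the contribution of the zeros with $|\beta_F-1/2|>\delta$ is $O(U\log\log T/\delta)$ by Theorem \ref{thm5}, while on the remaining zeros $x^{-\beta_F}=x^{-1/2}(1+O(\delta\log x))$. Choosing $\delta=(\log\log T)^2/\log T$ and summing dyadically over $U$ (or integrating), this shows
\begin{align*}
\sum_{1<\gamma_F<T}x^{i\gamma_F}=x^{-1/2}\sum_{1<\gamma_F<T}x^{\rho_F}+o(N_{F(s)}(1,T)).
\end{align*}

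Next I would invoke Theorem \ref{6}: the inner sum $\sum_{1<\gamma_F<T}x^{\rho_F}$ equals $\alpha_{(F'/F)(s)}(x)\,T/(2\pi)+O(\log T)$, which is $O(T)$, hence $o(N_{F(s)}(1,T))$ since by Theorem \ref{1} we have $N_{F(s)}(1,T)\sim\deg_1(F(s))\,(T/2\pi)\log T\to\infty$ faster than $T$. (Here $\deg_1(F(s))\geq1$ because $F$ is not constant, so the main term of Theorem \ref{1} is genuinely of order $T\log T$.) Combining the two displays gives $\sum_{1<\gamma_F<T}e^{2\pi i m\alpha\gamma_F}=o(N_{F(s)}(1,T))$, which is exactly the Weyl sum estimate required. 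For $m\alpha<0$ one applies the same argument to $\overline{e^{2\pi i m\alpha\gamma_F}}=e^{2\pi i(-m\alpha)\gamma_F}$ with $-m\alpha>0$, using that the $\gamma_F$ are real.

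The main obstacle is the passage from $x^{-\beta_F}$ to $x^{-1/2}$: one must be careful that the error terms accumulated over the dyadic decomposition in $T$ remain $o(N_{F(s)}(1,T))$, and in particular that Theorem \ref{thm5} is applied with a range $T^\alpha\leq U\leq T$ that is compatible with the dyadic blocks (taking $U=T/2$ at each stage, with $\alpha$ any fixed exponent in $(1/2,1)$, is enough). Everything else is a routine application of Weyl's criterion and the already-established theorems; no new analytic input about $F(s)$ beyond Theorems \ref{thm1}, \ref{1}, \ref{thm5}, and \ref{6} is needed.
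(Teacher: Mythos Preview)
Your proposal is correct and follows essentially the same route as the paper's proof: Weyl's criterion, the clustering estimate of Theorem \ref{thm5} with $\delta=(\log\log T)^2/\log T$ applied on dyadic blocks $[T,2T]$, the zero-free region of Theorem \ref{thm1} to bound $x^{\beta_F}$ uniformly, and finally Theorem \ref{6} to kill the remaining sum $\sum x^{\rho_F}$. The only cosmetic difference is that the paper first bounds $\sum_{1<\gamma_F<T}|\beta_F-1/2|=O(T\log T/\log\log T)$ and then uses $|x^{1/2+i\gamma_F}-x^{\rho_F}|\ll|\beta_F-1/2|$, whereas you split the exponential sum directly; the two are equivalent.
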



\section{Lemmas and Fundamental Results}

In this section, we prove some lemmas and fundamental results on the zeros of $F(s)$. Theorem \ref{thm1} and Theorem \ref{thm3} are proved in this section. 
\begin{lem}\label{lemma1}
For any $\varepsilon>0$, we have $\eta_n=O(n^\varepsilon).$
\end{lem}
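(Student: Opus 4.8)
The plan is to read the coefficients $\eta_n$ directly off the Dirichlet expansions of the $\zeta^{(l)}(s)$ and bound them by a divisor-type estimate. On $\Re(s)>1$ each derivative has the absolutely convergent expansion $\zeta^{(l)}(s)=\sum_{m\geq1}(-\log m)^l m^{-s}$, so for each $j$ the product $\zeta^{(0)}(s)^{d_{0j}}\cdots\zeta^{(k)}(s)^{d_{kj}}$ is, on that half-plane, the absolutely convergent Dirichlet series $\sum_{n\geq1}b_n^{(j)}n^{-s}$ whose $n$-th coefficient
\begin{align*}
b_n^{(j)}=\sum_{m_1\cdots m_{D_j}=n}(-\log m_1)^{e_1^{(j)}}\cdots(-\log m_{D_j})^{e_{D_j}^{(j)}}
\end{align*}
is a sum over ordered factorizations of $n$ into $D_j:=\sum_{l=0}^k d_{lj}$ positive integer factors, the exponents $e_1^{(j)},\ldots,e_{D_j}^{(j)}$ being, in some fixed order, the integer $l$ taken $d_{lj}$ times for $l=0,1,\ldots,k$. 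Multiplying out and invoking uniqueness of Dirichlet coefficients gives $\eta_n=\sum_{j=1}^M c_j\,b_n^{(j)}$, a finite linear combination with fixed constants.

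Next I would bound a single $b_n^{(j)}$. Every factor $m_i$ in a factorization of $n$ satisfies $m_i\leq n$ and $0\leq e_i^{(j)}\leq k$, so $|(-\log m_i)^{e_i^{(j)}}|\leq(1+\log n)^k$ in all cases (the convention $(-\log1)^0=1$ causing no trouble, and the terms with $m_i=1$, $e_i^{(j)}\geq1$ vanishing), whence
\begin{align*}
|b_n^{(j)}|\leq(1+\log n)^{kD_j}\,\tau_{D_j}(n),
\end{align*}
where $\tau_D(n)$ denotes the number of ways of writing $n$ as an ordered product of $D$ positive integers. The elementary divisor bound $\tau_D(n)=O(n^\varepsilon)$ (obtainable by iterating $\tau_2(n)=O(n^\varepsilon)$ through $\tau_D(n)\leq\tau_2(n)^{D-1}$) together with the trivial $(1+\log n)^{kD_j}=O(n^\varepsilon)$ yields $b_n^{(j)}=O(n^\varepsilon)$ for each fixed $j$, with an implied constant depending only on $F$ and $\varepsilon$. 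Summing the $M$ terms with the constants $c_j$ gives $\eta_n=O(n^\varepsilon)$, which is the assertion.

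I do not expect a genuine obstacle here; the only points to state with care are that the manipulation of Dirichlet series is legitimate — every series in sight converges absolutely on $\Re(s)>1$, so the coefficient identity $\eta_n=\sum_j c_j b_n^{(j)}$ follows from uniqueness of Dirichlet series coefficients — and that the implied constant is allowed to depend on $F$ (equivalently on $k$, $M$ and the $d_{lj}$) and on $\varepsilon$, but not on $n$. Since $\varepsilon>0$ is arbitrary one may even defer absorbing the logarithmic factor to the very end, obtaining the slightly stronger bound $\eta_n=O\big((1+\log n)^{k\deg_1(F(s))}\,\tau_{\deg_1(F(s))}(n)\big)$, but $O(n^\varepsilon)$ is all that will be needed in what follows.
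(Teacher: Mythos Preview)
Your argument is correct and follows essentially the same route as the paper: expand each monomial $c_j\zeta^{(0)}(s)^{d_{0j}}\cdots\zeta^{(k)}(s)^{d_{kj}}$ as a Dirichlet series, bound the logarithmic weights by $n^\varepsilon$, control the number of ordered factorizations by a divisor bound, and sum over $j$. The only cosmetic difference is that the paper writes the factorization count directly as $\ll d(n)^{d_{0j}+\cdots+d_{kj}}$ rather than via your $\tau_{D_j}(n)\leq\tau_2(n)^{D_j-1}$, but the substance is identical.
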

\begin{proof}
For sufficiently large $\sigma$, we have
\begin{align*}
&c_j\zeta^{(0)}(s)^{d_{0j}}\zeta^{(1)}(s)^{d_{1j}}\cdots\zeta^{(k)}(s)^{d_{kj}}\\
&\quad=c_j\left(\sum_{n=1}^{\infty}\frac{(-\log n)^0}{n^s}\right)^{d_{0j}}\left(\sum_{n=1}^{\infty}\frac{(-\log n)^1}{n^s}\right)^{d_{1j}}\cdots\left(\sum_{n=1}^{\infty}\frac{(-\log n)^k}{n^s}\right)^{d_{kj}}\\
&\quad=:\sum_{n=1}^{\infty}\frac{\eta^{(j)}_n}{n^s}.
\end{align*}
Since $\log n=O(n^{\varepsilon})$ holds for any small $\varepsilon>0$, the coefficient $\eta^{(j)}_n$ can be estimated as
\begin{align*}
\left|\eta^{(j)}_n\right|\ll n^\varepsilon \sum_{n_1n_2\cdots n_{d_{0j}+\cdots+d_{kj}}=n}1\ll n^\varepsilon d(n)^{d_{0j}+\cdots+d_{kj}},
\end{align*}
where $d(n)$ is the number of the divisors of $n$. It follows from $d(n)=O(n^{\varepsilon})$ that we have $\eta^{(j)}_n=O(n^\varepsilon)$. Hence we have $\eta_n=\eta^{(1)}_n+\cdots+\eta^{(M)}_n=O(n^{\varepsilon})$.
\end{proof}

\begin{lem}\label{lem2.2}
For sufficiently large $\sigma$, we have
\begin{align*}
F(s)=\frac{\eta_{n_F}}{n_F^s}+O((n_F+1)^{-\sigma}).
\end{align*}
\end{lem}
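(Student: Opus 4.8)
The plan is to start from the Dirichlet series representation $F(s)=\sum_{n\geq n_F}\eta_n n^{-s}$, which was recorded just before the statement of Theorem~\ref{1}. First I would note that this series converges absolutely for $\sigma>1$: each $\zeta^{(l)}(s)=\sum_{n\geq1}(-\log n)^l n^{-s}$ converges absolutely there, a finite product of absolutely convergent Dirichlet series is absolutely convergent on the common half-plane, and so is a finite sum of such; hence for $\sigma>1$ the representation of $F$ is legitimate and rearrangeable, with coefficients exactly the $\eta_n$ estimated in Lemma~\ref{lemma1}.

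Next I would isolate the leading term: for $\sigma>1$ we have
\[
F(s)-\frac{\eta_{n_F}}{n_F^{s}}=\sum_{n\geq n_F+1}\frac{\eta_n}{n^{s}},
\qquad\text{hence}\qquad
\left|F(s)-\frac{\eta_{n_F}}{n_F^{s}}\right|\leq\sum_{n\geq n_F+1}\frac{|\eta_n|}{n^{\sigma}},
\]
and this last bound is uniform in $t$. Applying Lemma~\ref{lemma1} with a fixed exponent (say with $\varepsilon=1$, so that $|\eta_n|\ll n$) gives $\sum_{n\geq n_F+1}|\eta_n|n^{-\sigma}\ll\sum_{n\geq n_F+1}n^{1-\sigma}$, and comparing this sum with $\int_{n_F+1}^{\infty}x^{1-\sigma}\,dx$ shows that for $\sigma>2$
\[
\sum_{n\geq n_F+1}n^{1-\sigma}\leq(n_F+1)^{1-\sigma}+\frac{(n_F+1)^{2-\sigma}}{\sigma-2}=(n_F+1)^{1-\sigma}\left(1+\frac{n_F+1}{\sigma-2}\right).
\]
For $\sigma$ sufficiently large the parenthesis is $O(1)$ (indeed $\leq2$ once $\sigma\geq n_F+3$), so the whole quantity is $\ll(n_F+1)^{1-\sigma}=(n_F+1)\cdot(n_F+1)^{-\sigma}=O\big((n_F+1)^{-\sigma}\big)$, the implied constant depending only on $F$. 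Combining the two displays yields $F(s)=\eta_{n_F}n_F^{-s}+O((n_F+1)^{-\sigma})$ for $\sigma$ large, as required.

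I do not expect any real obstacle: this is a warm-up lemma used to locate the ``first'' term of $F$ for large $\sigma$. The only mild point of care is the bookkeeping in the tail estimate — the crude bound produces $(n_F+1)^{1-\sigma}$ (or $(n_F+1)^{\varepsilon-\sigma}$ if one keeps $\varepsilon$ general), which differs from $(n_F+1)^{-\sigma}$ by a constant that depends on $F$; since $F$, and hence $n_F$, is fixed throughout, this is absorbed into the $O$-symbol, and in fact the single term $n=n_F+1$ already has exact order $(n_F+1)^{-\sigma}$ whenever $\eta_{n_F+1}\neq0$, so $(n_F+1)^{-\sigma}$ is genuinely the correct size.
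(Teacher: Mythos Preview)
Your proof is correct and follows essentially the same route as the paper: isolate the leading term, bound the tail using Lemma~\ref{lemma1} with $\eta_n=O(n^{\varepsilon})$, compare with $\int_{n_F+1}^{\infty}x^{\varepsilon-\sigma}\,dx$, and absorb the resulting factor $(n_F+1)^{1+\varepsilon}$ into the implied constant since $n_F$ is fixed. The only cosmetic difference is that you fix $\varepsilon=1$ and split off the $n=n_F+1$ term before integrating, whereas the paper keeps $\varepsilon$ generic and integrates directly from $n_F+1$.
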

\begin{proof}
Since
\begin{align*}
F(s)=\frac{\eta_{n_F}}{n_F^s}+\frac{\eta_{n_F+1}}{(n_F+1)^s}+\sum_{n>n_F+1}\frac{\eta_n}{n^s},
\end{align*}
it is enough to prove that the last term can be bounded by $O((n_F+1)^{-\sigma}).$ By Lemma \ref{lemma1}, we have
\begin{align*}
\sum_{n>n_F+1}\frac{\eta_n}{n^s}\ll \int_{n_F+1}^\infty x^{-\sigma+\varepsilon}dx\ll (n_F+1)^{-\sigma}
\end{align*}
because of the estimate $ (n_F+1)^{1+\varepsilon}=O(1)$.
\end{proof}

\begin{lem}\label{lem1}
Let $F(s)$ satisfy \eqref{condition}. For any $c>1$ and $\varepsilon>0$, the following equation holds in the region $\{s\in\mathbb{C}\ |\ \sigma>c,|s-(2n-1)|\geq\varepsilon \ (n\in\mathbb{N})\}$;
\begin{align*}
&F(1-s)\\&=\left(\sum_{j\in J}c_j\right)(-\log s)^{\deg_{2}\left(F(s)\right)}\left\{2(2\pi)^{-s}\Gamma(s)\cos\frac{\pi s}{2}\ \zeta(s)\right\}^{\deg_{1}\left(F(s)\right)}\left(1+O\left(\frac{1}{|\log s|}\right)\right).
\end{align*}
\end{lem}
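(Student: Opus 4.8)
The plan is to reduce the statement to the ordinary functional equation of $\zeta$ and then differentiate carefully. I would write $g(s):=2(2\pi)^{-s}\Gamma(s)\cos\frac{\pi s}{2}\,\zeta(s)$, so that the classical functional equation reads $\zeta(1-s)=g(s)$; differentiating $l$ times in $s$ yields $\zeta^{(l)}(1-s)=(-1)^l g^{(l)}(s)$, so everything hinges on the behaviour of $g^{(l)}(s)$ for $\sigma>c$. The first goal is to show that, in the region of the lemma and for $|s|$ large,
\[
g^{(l)}(s)=g(s)\,(\log s)^l\Bigl(1+O_l\bigl(1/|\log s|\bigr)\Bigr),\qquad\text{equivalently}\qquad \zeta^{(l)}(1-s)=g(s)\,(-\log s)^l\Bigl(1+O_l\bigl(1/|\log s|\bigr)\Bigr).
\]

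To get this I would pass to logarithmic derivatives: in the region $g(s)\neq0$ (since $\Gamma(s)$ has no pole, $\zeta(s)$ no zero for $\sigma>c>1$, and $\cos\frac{\pi s}{2}$ vanishes only at $s=2n-1$, which are excluded), and there
\[
\frac{g'}{g}(s)=-\log(2\pi)+\frac{\Gamma'}{\Gamma}(s)-\frac{\pi}{2}\tan\frac{\pi s}{2}+\frac{\zeta'}{\zeta}(s).
\]
By Stirling, $\frac{\Gamma'}{\Gamma}(s)=\log s+O(1/|s|)$ and $\bigl(\frac{\Gamma'}{\Gamma}\bigr)^{(m)}(s)=O(1/|s|)$ for $m\geq1$, uniformly for $\sigma>c$; on $\{|s-(2n-1)|\geq\varepsilon\}$ the function $\tan\frac{\pi s}{2}$ and all its derivatives are $O_\varepsilon(1)$ (this is exactly where the deleted discs are used); and $\frac{\zeta'}{\zeta}(s)$ and its derivatives are $O_c(1)$ for $\sigma>c>1$. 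Hence $\frac{g'}{g}(s)=\log s+O(1)$ while $\bigl(\frac{g'}{g}\bigr)^{(m)}(s)=O(1)$ for $m\geq1$. The claim then follows either by induction on $l$---differentiating the asserted formula and controlling the derivative of the error term by a Cauchy estimate on a disc of radius $\varepsilon/2$, which stays inside the region---or by Fa\`{a} di Bruno's formula, writing $g^{(l)}/g$ as the complete Bell polynomial in $g'/g,(g'/g)',\dots$ and observing that its only term of size $(\log s)^l$ is $(g'/g)^l=(\log s)^l(1+O(1/\log s))$, all others being $O((\log s)^{l-1})$.

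Next I would expand
\[
F(1-s)=\sum_{j=1}^M c_j\prod_{l=0}^k\zeta^{(l)}(1-s)^{d_{lj}}=\sum_{j=1}^M c_j\,g(s)^{D_j}(-\log s)^{E_j}\Bigl(1+O\bigl(1/\log s\bigr)\Bigr),\qquad D_j:=\sum_l d_{lj},\ \ E_j:=\sum_l l\,d_{lj},
\]
so $D_j\leq\deg_1(F(s))$, $E_j\leq kD_j\leq k\deg_1(F(s))$, and $j\in J$ exactly when $(D_j,E_j)=(\deg_1(F(s)),\deg_2(F(s)))$. Factoring out $g(s)^{\deg_1(F(s))}(-\log s)^{\deg_2(F(s))}$, I would invoke Stirling once more to see that $|g(s)|\to\infty$ as $|s|\to\infty$ in $\{\sigma>c\}$, and in fact $|g(s)|\gg|s|^{c-1/2}$ there (the growth of $\Gamma(s)$ together with $|\cos\frac{\pi s}{2}|\asymp e^{\pi|t|/2}$ beats $(2\pi)^{-\sigma}$ in every direction with $\sigma>c$, and $|\zeta(s)|\asymp_c1$). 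Since $E_j$ is bounded, the monomials with $D_j<\deg_1(F(s))$ then contribute $O\bigl(|g(s)|^{-1}(\log|s|)^{O(1)}\bigr)=o(1/|\log s|)$, those with $D_j=\deg_1(F(s))$ but $E_j<\deg_2(F(s))$ contribute $O(1/|\log s|)$, and those with $j\in J$ contribute $\sum_{j\in J}c_j+O(1/\log s)$. By \eqref{condition} the quantity $\sum_{j\in J}c_j$ is nonzero, so dividing back through gives
\[
F(1-s)=\Bigl(\sum_{j\in J}c_j\Bigr)(-\log s)^{\deg_2(F(s))}\,g(s)^{\deg_1(F(s))}\Bigl(1+O\bigl(1/|\log s|\bigr)\Bigr),
\]
which is the assertion after substituting $g(s)=2(2\pi)^{-s}\Gamma(s)\cos\frac{\pi s}{2}\zeta(s)$; for $|s|$ bounded in the region---a compact set on which $1/|\log s|$ is bounded away from $0$ and $\infty$ and the right-hand product is continuous and nonvanishing---the identity holds trivially.

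The hard part will be the uniform bookkeeping: securing Stirling-type expansions for $\Gamma'/\Gamma$ and its derivatives with errors uniform over all of $\{\sigma>c\}$ (especially when $|t|$ is large), bounding $\tan\frac{\pi s}{2}$ and its derivatives uniformly on the $\varepsilon$-deleted set, and---most delicately---checking that $|g(s)|$ genuinely outgrows every fixed power of $\log|s|$ throughout the region, i.e.\ that the growth of $\Gamma(s)$ and of $\cos\frac{\pi s}{2}$ stay in balance along all rays with $\Re(s)>c$. Once these uniform estimates are in hand, expanding the product and collecting terms by the pair $(\deg_1,\deg_2)$ is routine; this collection step is exactly what the two-degree definitions and condition \eqref{condition} are tailored to make work, since without \eqref{condition} the coefficient $\sum_{j\in J}c_j$ could vanish and the clean multiplicative form would collapse.
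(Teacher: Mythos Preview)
Your proposal is correct and follows essentially the same route as the paper: establish $\zeta^{(l)}(1-s)=(-\log s)^l\,g(s)\bigl(1+O(1/|\log s|)\bigr)$ for each $l$, multiply the monomials together, and then use Stirling to show that the terms with $(D_j,E_j)=(\deg_1,\deg_2)$ dominate. The only difference is that the paper quotes this derivative formula from \cite[Lemma~2.1]{on} rather than rederiving it via $g'/g$ and Fa\`a~di~Bruno as you do; after that point the two arguments coincide.
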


\begin{proof}
By \cite[Lemma 2.1]{on}, we have the  functional equation for $\zeta^{(l)}(s)$
\begin{align*}
\zeta^{(l)}(1-s)&=(-\log s)^l2(2\pi)^{-s}\Gamma(s)\cos\frac{\pi s}{2}\ \zeta(s)\left(1+O\left(\frac{1}{|\log s|}\right)\right).
\end{align*}
We note that this equation holds not only in $\{s\in\mathbb{C}\ |\ \sigma>c,|t|\geq1\}$ but also in $\{s\in\mathbb{C}\ |\ \sigma>c,|s-(2n-1)|\geq\varepsilon \ (n\in\mathbb{N})\}$, since  \cite[(8)]{on} is valid in the latter region. Multiplying this equation several times, we have
\begin{align*}
&\zeta^{(0)}(1-s)^{d_{0j}}\zeta^{(1)}(1-s)^{d_{1j}}\cdots\zeta^{(k)}(1-s)^{d_{kj}}\\
&\quad=(-\log s)^{\sum_{l=0}^k ld_{lj}}\left\{2(2\pi)^{-s}\Gamma(s)(\log s)^k\cos\frac{\pi s}{2}\ \zeta(s)\right\}^{\sum_{l=0}^k d_{lj}}\left(1+O\left(\frac{1}{|\log s|}\right)\right).
\end{align*}
By Stirling's formula , we estimate
\begin{align*}
&\zeta^{(0)}(1-s)^{d_{0j}}\zeta^{(1)}(1-s)^{d_{1j}}\cdots\zeta^{(k)}(1-s)^{d_{kj}}\\
&\quad\asymp |\log s|^{\sum_{l=0}^k ld_{lj}}\left\{\left|\frac{s}{2\pi e}\right|^{\sigma-1/2}\exp\left(|t|\left(\frac{\pi}{2}-|\arg(s)|\right)\right)\right\}^{\sum_{l=0}^k d_{lj}}
\end{align*}
where $\arg(s)\in(-\pi/2,\pi/2)$. Thus the main term of $F(s)$ is the terms whose index is in $J$.
\end{proof}

\noindent{\bf (Proof of Theorem \ref{thm1})}\ 
By Lemma \ref{lem1}, we set
\begin{align*}
&F(1-s)\\&=\left(\sum_{j\in J}c_j\right)(-\log s)^{\deg_{2}\left(F(s)\right)}\left\{2(2\pi)^{-s}\Gamma(s)\cos\frac{\pi s}{2}\ \zeta(s)\right\}^{\deg_{1}\left(F(s)\right)}\left(1+O\left(\frac{1}{|\log s|}\right)\right)\\
&=:A_1(s)+A_2(s).
\end{align*}
We can easily check $A_1(s)\neq0$ in  $\{s\in\mathbb{C}\ |\ \sigma>c,|s-(2n-1)|\geq\varepsilon \ (n\in\mathbb{N})\}$. It follows from $A_2(s)=O(|A_1(s)/\log s|)$ that $|A_2(s)|<|A_1(s)|$ holds for sufficiently large $\sigma$. Hence there exists $E_{1F\varepsilon}=E_{1F}$ such that $F(1-s)=A_1(s)+A_2(s)\neq0$ for $\sigma\geq 1-E_{1F}$ and $|s-(2n-1)|\geq\varepsilon\ (n\in\mathbb{N})$.

By Lemma \ref{lem2.2}, we set
$$
F(s)=\frac{\eta_{n_F}}{n_F^s}+O\left((n_F+1)^{-\sigma}\right)=:B_1+B_2
$$
for large $\sigma$. There exists $E_{2F}$ such that $|B_1|>|B_2|$ holds for $\sigma\ge E_{2F}$. Because of $B_1\neq0$, $\{s\in\mathbb{C}\ |\ \sigma\ge E_{2F}\}$ is a zero free region for $F(s)$.  {\hfill $\square$}
\ \\

\noindent{\bf (Proof of Theorem \ref{thm3})}\ 
In the proof of Theorem \ref{thm1}, we checked $|A_2(s)|<|A_1(s)|$ for large $\sigma$ and $|s-(2n-1)|\geq\varepsilon \ (n\in\mathbb{N})$. Applying Rouch\'e's theorem, $F(1-s)$ and $A_1(s)$ has the same number of zeros in $\{s\in\mathbb{C}\ |\ |s-(2n-1)|<\varepsilon\}$ for large $n\in \mathbb{N}$. The function $A_1(s)$ has exactly $\deg_{1}\left(F(s)\right)$ zeros at $s=2n-1$. Therefore, $F(1-s)$ has exactly $\deg_{1}\left(F(s)\right)$ zeros in $\{s\in\mathbb{C}\ |\ |s-(2n-1)|<\varepsilon\}$.{\hfill $\square$}
\ \\

\begin{lem}\label{LEM4}
There exist complex numbers $A_{F},\ B_{F}$ and non-negative integers $N_F,m_{F}$ such that the following equations hold;
\begin{align*}
&(s-1)^{N_F}F(s)=e^{A_{F}+B_{F}s}s^{m_{F}}\prod_{\substack{\rho_F\neq0\\ \rho_F\mbox{\footnotesize{:\ zeros of }}F(s)}}\left(1-\frac{s}{\rho_F}\right)e^{s/\rho_F},\\
&\frac{F'(s)}{F(s)}=-\frac{N_F}{s-1}+B_{F}+\frac{m_{F}}{s}+\sum_{\substack{\rho_F\neq0\\ \rho_F\mbox{\footnotesize{:\ zeros of }}F(s)}}\left(\frac{1}{s-\rho_F}+\frac{1}{\rho_F}\right).
\end{align*}
\end{lem}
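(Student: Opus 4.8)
The plan is to clear the pole of $F(s)$ at $s=1$, recognize the result as an entire function of order at most $1$, and then invoke the Hadamard factorization theorem; the logarithmic derivative of the factorization gives the second identity.

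First I would pin down the pole structure. Each $\zeta^{(l)}(s)$ is holomorphic on $\mathbb{C}\setminus\{1\}$ with a pole of order $l+1$ at $s=1$, so every monomial $\zeta^{(0)}(s)^{d_{0j}}\cdots\zeta^{(k)}(s)^{d_{kj}}$ is holomorphic away from $s=1$ with a pole there of order $\sum_{l}(l+1)d_{lj}$. Hence $F(s)$ is meromorphic on $\mathbb{C}$ with its only possible pole at $s=1$; let $N_F\ge 0$ denote the order of this pole (with $N_F=0$ if $F$ is holomorphic at $s=1$), so that $G(s):=(s-1)^{N_F}F(s)$ is entire, not identically zero, and has exactly the same zeros, with multiplicity, as $F(s)$.

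Next I would bound the order of $G$. Recall that $f(s):=(s-1)\zeta(s)$ is entire of order $1$; by Cauchy's estimate for derivatives on a disc of radius $1$ it follows that each $f^{(i)}(s)$ is entire of order at most $1$. Differentiating $\zeta(s)=f(s)/(s-1)$ repeatedly shows that $(s-1)^{l+1}\zeta^{(l)}(s)$ is a combination of $f(s),f'(s),\dots,f^{(l)}(s)$ with polynomial coefficients, hence entire of order at most $1$. Consequently, for each $j$ the function $(s-1)^{\sum_l(l+1)d_{lj}}\,\zeta^{(0)}(s)^{d_{0j}}\cdots\zeta^{(k)}(s)^{d_{kj}}$ is a finite product of entire functions of order $\le 1$, hence itself entire of order $\le 1$. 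Setting $P:=\max_{1\le j\le M}\sum_{l}(l+1)d_{lj}$, the function $(s-1)^{P}F(s)$ is a finite sum of such functions, so it is entire of order $\le 1$; and since $N_F\le P$, we have $G(s)=(s-1)^{N_F-P}\cdot(s-1)^{P}F(s)$, which is entire (the numerator vanishes at $s=1$ to order at least $P-N_F$) and still of order $\le 1$.

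Finally I would apply the Hadamard factorization theorem to $G$. Writing $m_F$ for the order of the zero of $G$ (equivalently, of $F$) at $s=0$ and $\rho_F$ for its nonzero zeros counted with multiplicity, the bound on the order gives $\sum_{\rho_F\ne0}|\rho_F|^{-1-\varepsilon}<\infty$, so the canonical product of genus $1$ converges locally uniformly and
\[
G(s)=e^{A_F+B_Fs}s^{m_F}\prod_{\rho_F\ne0}\Bigl(1-\frac{s}{\rho_F}\Bigr)e^{s/\rho_F}
\]
for suitable constants $A_F,B_F\in\mathbb{C}$; since $G(s)=(s-1)^{N_F}F(s)$, this is the first asserted identity. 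Taking the logarithmic derivative of both sides — legitimate term by term because the product converges locally uniformly and absolutely — and using $G'/G = N_F/(s-1) + F'/F$ yields the second identity. The only step demanding genuine care is the order estimate for $G$ (the reduction to the order-one function $f=(s-1)\zeta$ and the stability of ``order $\le 1$'' under differentiation, finite products, and finite sums); everything else is a routine invocation of classical complex analysis.
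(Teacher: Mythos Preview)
Your proposal is correct and follows essentially the same approach as the paper: clear the pole at $s=1$, show the resulting entire function has order at most $1$, and apply Hadamard's factorization theorem, then take the logarithmic derivative. The only cosmetic difference is in the order estimate---the paper bounds $\zeta^{(k)}(s)\ll\exp(|s|^{1+\varepsilon})$ directly via the Cauchy integral formula applied to $\zeta$, whereas you route the same Cauchy-estimate argument through $f(s)=(s-1)\zeta(s)$ and its derivatives; both arrive at the same conclusion with the same amount of work.
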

\begin{proof}
Since
\begin{align*}
\zeta^{(k)}(s)=\frac{k!}{2\pi i}\int_{|z-s|=a}\frac{\zeta(z)}{(z-s)^{k+1}}dz
\end{align*}
and
\begin{align*}
\zeta(s)&\ll\exp(|s|^{1+\varepsilon}),
\end{align*}
we have
\begin{align*}
\zeta^{(k)}(s)&\ll\exp(|s|^{1+\varepsilon})
\end{align*}
for any small $\varepsilon>0$ and large $|s|$. Therefore we also have
\begin{align*}
F(s)&\ll\exp(|s|^{1+\varepsilon}).
\end{align*}

$F(s)$ is holomorphic on $\mathbb{C}\setminus\{1\}$, because of the continuation of $\zeta(s)$. We assume that $F(s)$ has a pole of order $N_F$ at $s=1$. Then $(s-1)^{N_F}F(s)$ is an entire function and is of order $1$. Hence by the Hadamard factorization theorem, the lemma is valid.
\end{proof}
\begin{lem}\label{LEM3.1}
Let $F(s)$ satisfy \eqref{condition}. 
For sufficiently large $T$, we have
\begin{align*}
N_{F(s)}(T,T+1)\ll\log T.
\end{align*}
\end{lem}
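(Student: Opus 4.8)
The plan is to bound $N_{F(s)}(T,T+1)$ by the number of zeros of $F$ in a disc of \emph{bounded} radius, and to control the latter by Jensen's formula, centring the disc at a point to the right of all ``nontrivial'' zeros of $F$ — exactly the mechanism behind the classical Riemann--von Mangoldt argument.

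First I would confine the relevant zeros to a bounded strip. By Theorem \ref{thm1} (with $\varepsilon=1$), the set $\{\sigma\le E_{1F}\}$ is zero free except inside the discs $|s+2n|<1$, and those discs meet only the region $|t|<1$; hence for $T>1$ every zero $\rho_F=\beta_F+i\gamma_F$ with $T<\gamma_F<T+1$ satisfies $E_{1F}<\beta_F<E_{2F}$. Now fix once and for all a constant $a>E_{2F}$ large enough that Lemma \ref{lem2.2} applies at $\sigma=a$; since the error term there is independent of $t$, we get $|F(a+it)|\ge \tfrac12|\eta_{n_F}|n_F^{-a}=:c_F>0$ for all real $t$. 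Put $s_0:=a+iT$ and choose a constant $R=R(F)$ so large that the closed disc $D_1:=\{|s-s_0|\le R\}$ contains the rectangle $\{E_{1F}\le\sigma\le E_{2F},\ T\le t\le T+1\}$. For $T>2R$ the larger disc $D_2:=\{|s-s_0|\le 2R\}$ avoids $s=1$, so $F$ is holomorphic on $D_2$, and $F(s_0)\neq0$.

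Next I would bound $|F|$ on $D_2$. Every $s\in D_2$ has $a-2R\le\sigma\le a+2R$ and $|t-T|\le 2R$, hence lies in a fixed vertical strip with $|t|\asymp T$. In any fixed vertical strip $\zeta(s)$ is polynomially bounded in $|t|$: for $0\le\sigma\le 2$ this is the convexity bound, and for $\sigma<0$ it follows from the functional equation together with Stirling's formula (the exponential factors cancel, leaving $\zeta(s)\ll|t|^{1/2-\sigma}$). Writing $\zeta^{(l)}(s)$ as a Cauchy integral of $\zeta$ over the circle $|z-s|=1$ gives the same polynomial bound for each $\zeta^{(l)}$, and therefore $|F(s)|\ll T^{C}$ on $D_2$ for some $C=C(F)$. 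Consequently $\log|F(s_0+2Re^{i\theta})|\le C\log T+O(1)$. Finally, letting $n(r)$ denote the number of zeros of $F$ in $|s-s_0|\le r$, Jensen's formula yields
\begin{align*}
n(R)\log 2\le \frac{1}{2\pi}\int_0^{2\pi}\log\bigl|F(s_0+2Re^{i\theta})\bigr|\,d\theta-\log|F(s_0)|\ll \log T+(-\log c_F)\ll\log T,
\end{align*}
and since $D_1\subseteq D_2$ contains the rectangle in question, $N_{F(s)}(T,T+1)\le n(R)\ll\log T$.

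The argument has no real obstacle; the two places needing care are (i) the confinement of the zeros with $T<\gamma_F<T+1$ to a fixed strip, so that a disc of bounded radius already captures all of them, and (ii) the fact that the lower bound $|F(s_0)|\ge c_F$ is uniform in $T$ — both of which are handled by Theorem \ref{thm1} and Lemma \ref{lem2.2}, respectively. The polynomial-in-$t$ upper bound for $F$ in a fixed vertical strip is entirely routine.
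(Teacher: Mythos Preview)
Your argument is correct and essentially identical to the paper's: both centre a disc of bounded radius at a point $\sigma=D$ (resp.\ $\sigma=a$) to the right of the zero region, use Lemma \ref{lem2.2} for the lower bound at the centre, a polynomial-in-$|t|$ upper bound for $F$ on the disc, and Jensen's formula to conclude. The only cosmetic differences are that the paper cites \cite[(12)]{on} for the polynomial bound while you derive it, and the paper extracts $n(D-E_{1F}+1)$ from $\int_{D-E_{1F}+1}^{D-E_{1F}+2}n(r)/r\,dr$ rather than using the $n(R)\log 2$ form.
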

\begin{proof}
By Lemma \ref{lem2.2}, we have
\begin{align*}
F(s)-\frac{\eta_{n_F}}{n_F^s}=O\left((n_F+1)^{-\sigma}\right).
\end{align*}
Thus there exists a constant $D\ge E_{2F}$ such that $|F(s)-\eta_{n_F}/n_F^s|\leq |\eta_{n_F}/n_F^s|/2$ holds for $\sigma\ge D$. By the triangle inequality, we have 
\begin{align}
|F(s)|\geq |\eta_{n_F}/n_F^s|/2\label{EstOnD}
\end{align}
for $\sigma\ge D$. By \cite[(12)]{on}, we have $F(s)\ll|t|^{\deg_1(F(s))\mu(\sigma)+\varepsilon}$ if $\mu(\sigma)$ satisfies $\zeta(s)\ll |t|^{\mu(\sigma)+\varepsilon}$. This function $\mu(\sigma)$ satisfies the inequality
\begin{align*}
\mu(\sigma)\leq\begin{cases}
  \displaystyle0&(\sigma\geq1),\\
  \displaystyle1/2-\sigma/2&(0<\sigma<1),\\
  \displaystyle1/2-\sigma&(\sigma\leq0).
\end{cases}
\end{align*}
By Jensen's theorem, we have
\begin{align*}
&\int_0^{D-E_{1F}+2}\frac{n(r)}{r}dr\\
&\quad=\frac{1}{2\pi}\int_0^{2\pi}\log\left|F\left(D+iT+\left(D-E_{1F}+2\right)e^{i\theta}\right)\right|d\theta
-\log\left|F\left(D+iT\right)\right|,
\end{align*}
where $n(r)$ is the number of zeros of $F(s)$  counted with multiplicity  in the circle with center $D+iT$ and radius $r$. Since $F(s)\ll |t|^{\deg_1(F(s))\mu(\sigma)+\varepsilon}$, there exists a constant $D_1>0$ such that
\begin{align*}
\log\left|F\left(D+iT+(D-E_{1F}+2)e^{i\theta}\right)\right|\leq D_1\log T.
\end{align*}
Since $|F(D+iT)|\geq |\eta_{n_F}/n_F^D|/2$, there exists a constant $D_2$ such that
\begin{align*}
\log\left|F\left(D+iT\right)\right|\ge D_2.
\end{align*}
Because of $\int_0^{D-E_{1F}+2}(n(r)/r)dr\ge0$, we have
\begin{align}
\int_0^{D-E_{1F}+2}\frac{n(r)}{r}dr\ll\log T.\label{nt1}
\end{align}
On the other hand, we have
\begin{align}
\notag\int_0^{D-E_{1F}+2}\frac{n(r)}{r}dr
\notag&\geq\int_{D-E_{1F}+1}^{D-E_{1F}+2}\frac{n(r)}{r}dr\\
\label{nt2}&\geq n(D-E_{1F}+1)\int_{D-E_{1F}+1}^{D-E_{1F}+2}\frac{1}{r}dr.
\end{align}
From (\ref{nt1}) and (\ref{nt2}), we have
\begin{align*}
N_{F(s)}(T,T+1)\leq n(D-E_{1F}+1)\ll\log T.
\end{align*}
\end{proof}
\begin{lem}\label{LEM3.2}
Let $F(s)$ satisfy \eqref{condition}. Let $\sigma_1$ and $\sigma_2$ be real numbers with $\sigma_1<\sigma_2$. For $s\in\mathbb{C}$ with $\sigma_1<\sigma<\sigma_2$ and large $t$, we have
\begin{align*}
\frac{F'(s)}{F(s)}=\sum_{|\gamma_F-t|<1}\frac{1}{s-\rho_F}+O(\log t).
\end{align*}
\end{lem}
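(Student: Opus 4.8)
The plan is to follow the classical argument that produces the analogous formula for $\zeta'/\zeta$ (and for $\zeta^{(k)}(s)-a$ in \cite{on}), namely to start from the Hadamard-product logarithmic derivative in Lemma \ref{LEM4} and difference it at $s$ against a nearby reference point with large real part. Concretely, fix a point $s_0=D+it$ with $D$ the constant from Lemma \ref{LEM3.1} (so that $|F(s_0)|\geq |\eta_{n_F}/n_F^{D}|/2$ by \eqref{EstOnD}), and write, using the second displayed identity of Lemma \ref{LEM4},
\begin{align*}
\frac{F'(s)}{F(s)}-\frac{F'(s_0)}{F(s_0)}
=-N_F\left(\frac{1}{s-1}-\frac{1}{s_0-1}\right)+m_F\left(\frac{1}{s}-\frac{1}{s_0}\right)
+\sum_{\rho_F\neq0}\left(\frac{1}{s-\rho_F}-\frac{1}{s_0-\rho_F}\right).
\end{align*}
The first two parenthesised terms are $O(1/t)$ for $s$ in the strip $\sigma_1<\sigma<\sigma_2$ and $t$ large, so they are absorbed into the error term. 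The term $F'(s_0)/F(s_0)$ must also be shown to be $O(\log t)$; this is where the upper bound $F(s)\ll |t|^{\deg_1(F(s))\mu(\sigma)+\varepsilon}$ (from \cite[(12)]{on}, already recalled in the proof of Lemma \ref{LEM3.1}) together with the lower bound \eqref{EstOnD} is used, via a Borel--Carath\'eodory / Cauchy-estimate argument on a disc centred at a point of real part $D$: $\log F$ is bounded above on such a disc by $O(\log t)$, and $F$ is bounded below on a nearby smaller concentric circle, so $F'/F$ is $O(\log t)$ there.

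The heart of the matter is the sum over zeros. Splitting it as the contribution of zeros with $|\gamma_F-t|\geq 1$ and those with $|\gamma_F-t|<1$: for the far zeros one uses that
$$
\left|\frac{1}{s-\rho_F}-\frac{1}{s_0-\rho_F}\right|=\frac{|s-s_0|}{|s-\rho_F||s_0-\rho_F|}\ll\frac{1}{|\gamma_F-t|^2},
$$
since $|s-s_0|=O(1)$ and both $|s-\rho_F|,|s_0-\rho_F|\gg|\gamma_F-t|$ when $|\gamma_F-t|\geq1$; summing this against the zero-counting estimate of Lemma \ref{LEM3.1} (which gives $O(\log(|n|+2))$ zeros with $|\gamma_F-n|\leq 1$, hence $O(\log t)$ zeros in any unit interval near height $t$) yields a convergent sum of total size $O(\log t)$. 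For the near zeros $|\gamma_F-t|<1$, the terms $1/(s_0-\rho_F)$ are each $O(1)$ and there are $O(\log t)$ of them by Lemma \ref{LEM3.1}, so $\sum_{|\gamma_F-t|<1}1/(s_0-\rho_F)=O(\log t)$; the remaining terms $\sum_{|\gamma_F-t|<1}1/(s-\rho_F)$ are exactly the main term of the asserted formula. Collecting everything gives $F'(s)/F(s)=\sum_{|\gamma_F-t|<1}1/(s-\rho_F)+O(\log t)$.

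The main obstacle I expect is the bookkeeping in the tail sum over zeros: one needs a clean dyadic (or integer-interval) decomposition $\sum_{n}\ \sum_{|\gamma_F-t|\in[n,n+1)}$, uniform control of the number of zeros in each such block via Lemma \ref{LEM3.1} (which must be applied with the window centred at $t$, not at an integer, but that is harmless since a unit window meets at most two integer windows), and then summing $\sum_{n\geq1}\log(t+n)/n^2\ll\log t$. A secondary technical point is making the Borel--Carath\'eodory step genuinely uniform in $t$: one must choose the radii of the two concentric discs independently of $t$, which is fine because the upper bound $|t|^{C}$ on $|F|$ and the lower bound \eqref{EstOnD} on a circle of fixed radius both hold with constants not depending on $t$. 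None of these steps is deep; the content is entirely the combination of Lemma \ref{LEM4}, Lemma \ref{LEM3.1}, and the polynomial growth bound.
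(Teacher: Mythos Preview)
Your proposal is correct and follows essentially the same route as the paper: difference the Hadamard logarithmic derivative of Lemma~\ref{LEM4} at $s$ against a reference point $s_0=E+it$ with large real part, then split the zero sum into near ($|\gamma_F-t|<1$) and far ($|\gamma_F-t|\geq 1$) contributions, handling the latter via the $1/|\gamma_F-t|^2$ bound together with Lemma~\ref{LEM3.1}.

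The one place where the paper is more economical is the bound on $F'(s_0)/F(s_0)$. Rather than a Borel--Carath\'eodory argument, the paper simply observes that for $E$ sufficiently large, Lemma~\ref{lem2.2} applied to $F$ and to $F'$ gives $|F(E+it)|\geq \tfrac12|\eta_{n_F}|n_F^{-E}$ and $|F'(E+it)|\leq 2|\eta_{n_{F'}}|n_{F'}^{-E}$ directly from the Dirichlet series, so $F'(E+it)/F(E+it)=O(1)$. This is both quicker and sharper than your $O(\log t)$, and it sidesteps the extra verification your approach would need, namely that $F$ is zero-free on the entire Borel--Carath\'eodory disc so that $\log F$ is analytic there (easily arranged by taking $E\geq E_{2F}+R$ via Theorem~\ref{thm1}, but worth making explicit).
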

\begin{proof}
Similar to estimate \eqref{EstOnD}, we have
\begin{align*}
|F(E+it)|\geq \frac{1}{2}\frac{|\eta_{n_F}|}{n_F^E}
\end{align*}
and
\begin{align*}
|F'(E+it)|\leq 2\frac{|\eta_{n_{F'}}|}{n_{F'}^E}
\end{align*}
for sufficiently large $E$.
Hence, we have
\begin{align*}
\frac{F'(E+it)}{F(E+it)}\leq4\frac{|\eta_{n_{F'}}|/n_{F'}^E}{|\eta_{n_F}|/n_F^E}\ll1.
\end{align*}
By Lemma \ref{LEM4}, we have
\begin{align*}
&\frac{F'(s)}{F(s)}=\sum_{\substack{\rho_F\neq0\\ \rho_F\mbox{\footnotesize{:\ zeros of }}F(s)}}\left(\frac{1}{s-\rho_F}+\frac{1}{\rho_F}\right)+O(\log t)
\end{align*}
for  $\sigma_1<\sigma<\sigma_2$ and large $t$. Substituting $s=E+it$, we have
\begin{align*}
O(\log t)=\sum_{\substack{\rho_F\neq0\\ \rho_F\mbox{\footnotesize{:\ zeros of }}F(s)}}\left(\frac{1}{E+it-\rho_F}+\frac{1}{\rho_F}\right).
\end{align*}
Therefore we have
\begin{align*}
\frac{F'(s)}{F(s)}&=\sum_{\substack{\rho_F\neq0\\ \rho_F\mbox{\footnotesize{:\ zeros of }}F(s)}}\left(\frac{1}{s-\rho_F}+\frac{1}{E+it-\rho_F}\right)+O(\log t)\\
&=\left(\sum_{|\gamma_F-t|<1}+\sum_{\substack{\rho_F\neq0\\|\gamma_F-t|\ge1}}\right)\left(\frac{1}{s-\rho_F}+\frac{1}{E+it-\rho_F}\right)+O(\log t).
\end{align*}
For $\sigma_1<\sigma<\sigma_2$ and $|\gamma_F-t|\ge1$, we have
\begin{align*}
\left|\frac{1}{s-\rho_F}+\frac{1}{E+it-\rho_F}\right|\ll\frac{1}{|t-\gamma_F|^2}
\end{align*}
Hence by Lemma \ref{LEM3.1}, we have
\begin{align*}
\sum_{\substack{\rho_F\neq0\\|\gamma_F-t|\ge1}}\left(\frac{1}{s-\rho_F}+\frac{1}{E+it-\rho_F}\right)=O(\log t).
\end{align*}
By Lemma \ref{LEM3.1}, we also have
\begin{align*}
\sum_{|\gamma_F-t|<1}\frac{1}{E+it-\rho_F}=O(\log t).
\end{align*}
\end{proof}
\begin{lem}\label{lem7}
Let $F(s)$ satisfy \eqref{condition}. For sufficiently large $\sigma$ and $|t|\ge1$, we have
\begin{align*}
\frac{F'}{F}(1-s)=O(|\log s|).
\end{align*}
\end{lem}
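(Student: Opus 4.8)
The plan is to reduce everything to the functional equation for the individual derivatives $\zeta^{(l)}$ recorded in the proof of Lemma \ref{lem1}. Write $P_j(s):=\zeta^{(0)}(s)^{d_{0j}}\cdots\zeta^{(k)}(s)^{d_{kj}}$, so that $F(s)=\sum_{j=1}^M c_j P_j(s)$, set $\deg_1(P_j(s)):=\sum_{l=0}^k d_{lj}$ and $\deg_2(P_j(s)):=\sum_{l=0}^k l\,d_{lj}$, and abbreviate $W(s):=2(2\pi)^{-s}\Gamma(s)\cos\frac{\pi s}{2}\,\zeta(s)$, the common factor in the functional equations. Logarithmic differentiation of each $P_j$ gives $F'(s)=\sum_{j=1}^M c_j P_j(s)Q_j(s)$ with $Q_j(s):=\sum_{l=0}^k d_{lj}\,\zeta^{(l+1)}(s)/\zeta^{(l)}(s)$, hence $(F'/F)(1-s)=\bigl(\sum_j c_j P_j(1-s)Q_j(1-s)\bigr)/F(1-s)$; it suffices to bound the numerator by $O(|\log s|)$ times $|F(1-s)|$.

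First I would use $\zeta^{(l)}(1-s)=(-\log s)^l W(s)\bigl(1+O(1/|\log s|)\bigr)$, which holds on $\sigma>c$, $|s-(2n-1)|\ge\varepsilon$, hence in particular on $\sigma>c$, $|t|\ge1$. For $\sigma$ large the right-hand side is nonzero, so $\zeta^{(l)}(1-s)\ne0$ and $\zeta^{(l+1)}(1-s)/\zeta^{(l)}(1-s)=(-\log s)(1+O(1/|\log s|))=O(|\log s|)$; summing the finitely many terms, $Q_j(1-s)=O(|\log s|)$ with an $F$-dependent constant. The same substitution gives $|P_j(1-s)|\asymp|\log s|^{\deg_2(P_j(s))}|W(s)|^{\deg_1(P_j(s))}$, and Lemma \ref{lem1} (using \eqref{condition} and $W(s)\ne0$ for $\sigma>1$) gives $|F(1-s)|\asymp|\log s|^{\deg_2(F(s))}|W(s)|^{\deg_1(F(s))}$ for $\sigma$ large, $|t|\ge1$.

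The step needing care is to show $|P_j(1-s)|\ll|F(1-s)|$ for every $j$. If $\deg_1(P_j(s))=\deg_1(F(s))$, then $\deg_2(P_j(s))\le\deg_2(F(s))$ by the definition of $\deg_2$, and $|\log s|\ge1$ finishes it. If $\deg_1(P_j(s))\le\deg_1(F(s))-1$, then $\deg_2(P_j(s))\le k\deg_1(P_j(s))\le k(\deg_1(F(s))-1)$, so
\[
|P_j(1-s)|\ll\frac{|\log s|^{k(\deg_1(F(s))-1)}}{|W(s)|}\,|W(s)|^{\deg_1(F(s))},
\]
and since $|W(s)|\gg|\Gamma(s)|(2\pi)^{-\sigma}$ — here $|\cos\frac{\pi s}{2}|\ge\sinh\frac{\pi}{2}$ for $|t|\ge1$ and $|\zeta(s)|\ge\tfrac12$ for $\sigma$ large — grows faster than any fixed power of $|\log s|$ as $\sigma\to\infty$ (Stirling, the super-polynomial growth of $\Gamma(s)$ in $\sigma$; when $|t|$ is also large the exponential factors of $\Gamma(s)$ and $\cos\frac{\pi s}{2}$ cancel), the prefactor is $\ll1$ for $\sigma$ large and again $|P_j(1-s)|\ll|F(1-s)|$. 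This uniform control of the ``lower-first-degree'' terms, which a priori carry spurious powers of $|\log s|$ coming from high-order derivatives, is the main obstacle; the rest is quotation of earlier results. Combining the bounds, $|F'(1-s)|\le\sum_j|c_j|\,|P_j(1-s)|\,|Q_j(1-s)|\ll|\log s|\sum_j|P_j(1-s)|\ll|\log s|\,|F(1-s)|$, which gives $(F'/F)(1-s)=O(|\log s|)$.

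An alternative is to differentiate Lemma \ref{lem1} directly. Write $F(1-s)=G(s)(1+E(s))$ with $G(s)=\bigl(\sum_{j\in J}c_j\bigr)(-\log s)^{\deg_2(F(s))}W(s)^{\deg_1(F(s))}$ and $E(s)=O(1/|\log s|)$; since $E$ is holomorphic on the disc of radius $1/2$ about each $s$ with $\sigma$ large and $|t|\ge1$ (there $1-w\ne1$ and $G(w)\ne0$, and Lemma \ref{lem1} applies with $\varepsilon=1/2$), Cauchy's estimate gives $E'(s)=O(1/(|s||\log s|))$, while $G'(s)/G(s)=\deg_2(F(s))/(s\log s)+\deg_1(F(s))\,W'(s)/W(s)$ and $W'(s)/W(s)=-\log2\pi+\Gamma'(s)/\Gamma(s)-\tfrac{\pi}{2}\tan\tfrac{\pi s}{2}+\zeta'(s)/\zeta(s)=O(|\log s|)$ on $\sigma>c$, $|t|\ge1$. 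Then $(F'/F)(1-s)=-G'(s)/G(s)-E'(s)/(1+E(s))=O(|\log s|)$.
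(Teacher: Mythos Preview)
Your first argument is correct and reaches the bound by the same mechanism as the paper, but the paper organizes it more economically. Instead of the logarithmic-derivative decomposition $F'=\sum_j c_j P_j Q_j$ followed by a case split on $\deg_1(P_j)$, the paper simply observes that $F'$ is itself a polynomial in $\zeta,\zeta',\ldots$ of the same shape as $F$, reads off $\deg_1(F')\le\deg_1(F)$ and $\deg_2(F')\le\deg_2(F)+1$ from the product rule, and applies the upper-bound half of Lemma~\ref{lem1} directly to $F'$ (condition~\eqref{condition} for $F'$ is not needed, since only an upper bound is wanted; the paper remarks on this explicitly). Dividing that bound by the asymptotic for $F(1-s)$ from Lemma~\ref{lem1} finishes the proof in one line. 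Your case analysis---in particular the step where lower-$\deg_1$ monomials are absorbed because your $W(s)$ outgrows every fixed power of $|\log s|$---is precisely what the proof of Lemma~\ref{lem1} already contains, so the paper avoids repeating it; your version has the virtue of making that mechanism explicit at the point of use. Your second route, differentiating the asymptotic $F(1-s)=G(s)(1+E(s))$ and controlling $E'$ by Cauchy's estimate, is a genuinely different alternative and also works; one small correction is that Cauchy on a disc of fixed radius together with $E=O(1/|\log s|)$ only yields $E'(s)=O(1/|\log s|)$, not the $O(1/(|s||\log s|))$ you state, but since you only need $E'/(1+E)=O(1)$ this does not affect the conclusion.
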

\begin{proof}
Since
\begin{align*}
&\frac{d}{ds}\left(\zeta^{(0)}(s)^{d_{0j}}\zeta^{(1)}(s)^{d_{1j}}\cdots\zeta^{(k)}(s)^{d_{kj}}\right)\\
&\qquad\qquad=\sum_{a=0}^k d_{aj}\zeta^{(a)}(s)^{d_{aj}-1}\zeta^{(a+1)}(s)\prod_{\substack{0\le l\le k\\l\neq a}}\zeta^{(l)}(s)^{d_{lj}},
\end{align*}
we have $\deg_{1}(F'(s))\le\deg_{1}(F(s))$ and $\deg_{2}(F'(s))\le\deg_{2}(F(s))+1$. By Lemma \ref{lem1}, we have
\begin{align}
F'(1-s)=O\left(|\log s|^{\deg_{2}\left(F(s)\right)+1}\left|(2\pi)^{-s}\Gamma(s)\cos\frac{\pi s}{2}\ \zeta(s)\right|^{\deg_{1}\left(F(s)\right)}\right).\label{vmt}
\end{align}
Note that we do not assume the condition \eqref{condition} for $F'(s)$. If $F'(s)$ does not satisfy this condition, the main term in Lemma \ref{lem1} vanishes. Then, $F'(1-s)$ can be bounded by the error term in Lemma \ref{lem1}. Therefore \eqref{vmt} also holds in this case. Dividing \eqref{vmt} by $F(1-s)$ and applying Lemma \ref{lem1}, we obtain Lemma \ref{lem7}.
\end{proof}


\section{Proof of Theorem \ref{1}}

In this section, we prove Theorem \ref{1}. By Cauchy's theorem, we have
\begin{align*}
N_{F(s)}(1,T)&=\frac{1}{2\pi}\Im\left(\int_{E_{1F}'+i}^{E_{2F}'+i}+\int_{E_{2F}'+i}^{E_{2F}'+iT}+\int_{E_{2F}'+iT}^{E_{1F}'+iT}+\int_{E_{1F}'+iT}^{E_{1F}'+i}\right)\frac{F'(s)}{F(s)}ds\\
&=:\frac{1}{2\pi}(J_1+J_2+J_3+J_4),
\end{align*}
for sufficiently large $E_{2F}'\geq E_{2F}$ and sufficiently small $E_{1F}'\leq E_{1F}$.

The first term $J_1$ does not depend on $T$, so we have $J_1=O(1)$. 

By Lemma \ref{lem2.2}, the second term is estimated as
\begin{align*}
\frac{1}{2\pi}J_2=\frac{1}{2\pi} \left[\arg(F(s))\right]_{E_{2F}'+i}^{E_{2F}'+iT}=\frac{1}{2\pi} \left[\arg(\eta_{n_F}n_F^{-s})+O(1)\right]_{E_{2F}'+i}^{E_{2F}'+iT}.
\end{align*}
Hence we have
\begin{align*}
\frac{1}{2\pi}J_2=-\frac{T}{2\pi}\log n_F+O(1).
\end{align*}

Next, we estimate $J_3$. Applying Lemma \ref{LEM3.2}, we have
\begin{align*}
J_3&=\Im\int_{E_{2F}'+iT}^{E_{1F}'+iT}\sum_{|\gamma_F-t|<1}\frac{1}{s-\rho_F}ds+O\left(\int_{E_{2F}'+iT}^{E_{1F}'+iT}\log t\ ds\right)\\
&=\Im\sum_{|\gamma_F-T|<1}\int_{E_{2F}'+iT}^{E_{1F}'+iT}\frac{1}{s-\rho_F}ds+O\left(\log T\right).
\end{align*}
For each integral, we change the path of integration. If $\gamma_F< T$, then we change the path to the upper semicircle with center $\rho_F$ and radius $1$. If $\gamma_F> T$, then we change the path to the lower semicircle with center $\rho_F$ and radius $1$. Then all integrals are bounded by $O(1)$.
Therefore, by Lemma \ref{LEM3.1}, we have
\begin{align*}
J_3=\sum_{|\gamma_F-T|<1}O(1)+O\left(\log T\right)=O(\log T).
\end{align*}

Finally, we consider $J_4$. Since $\arg(F(E_{1F}'+i))$ does not depend on $T$, we have
\begin{align*}
J_4= \left[\arg(F(s))\right]_{E_{1F}'+iT}^{E_{1F}'+i}=-\arg\left(F(E_{1F}'+iT)\right)+O(1).
\end{align*}
By Lemma \ref{lem1}, we have
\begin{align*}
&\arg\left(F(E_{1F}'+iT)\right)\\
&\quad=\deg_1(F(s))T\log(2\pi)+\deg_1(F(s))\arg\Gamma(1-E_{1F}'-iT)+O(1)
\end{align*}
By Stirling's formula, we have
$$
\arg\Gamma(1-E_{1F}'-iT)=-T\log\frac{T}{e}+O(\log T).
$$
Hence we have
\begin{align*}
J_4=\deg_1(F(s))T\log\frac{T}{2\pi e}+O(\log T).
\end{align*}

Therefore we have
\begin{align*}
N_{F(s)}(1,T)&=\frac{1}{2\pi}(J_1+J_2+J_3+J_4)\\
&=\frac{\deg_1(F(s))T}{2\pi}\log\frac{T}{2\pi e}-\frac{T}{2\pi}\log n_F+O(\log T).
\end{align*} {\hfill $\square$}


\section{Proofs of Theorem \ref{thm4} and Theorem \ref{thm5}}

\begin{lem}\label{LEM5.1}
For $U\gg1$, sufficiently large $T\gg U$ and a real number $b<D$
 ($D$ is defined in the proof of Lemma \ref{LEM3.1}), we have
\begin{align*}
2\pi \sum_{\substack{T<\gamma_F<T+U\\\beta_F>b}}\left(\beta_F-b\right)=\int_T^{T+U}\log\left|F\left(b+it\right)\right|dt-U\log\left|\frac{\eta_{n_F}}{n_F^{b}}\right|+O(\log T).
\end{align*}
\end{lem}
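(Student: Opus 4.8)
The plan is to apply Littlewood's lemma not to $F$ itself but to the auxiliary function $G(s):=n_F^{s}F(s)/\eta_{n_F}$. By Lemma~\ref{lem2.2} we have $G(s)-1=O\big((n_F/(n_F+1))^{\sigma}\big)\to 0$ as $\sigma\to+\infty$, uniformly in $t$; and by the choice of $D$ in the proof of Lemma~\ref{LEM3.1}, $|G(s)-1|\le 1/2$ for $\sigma\ge D$, so $F$, hence $G$, is zero-free in $\sigma\ge D$. Moreover $G$ has exactly the same zeros as $F$ apart from $s=1$, and since $|n_F^{it}|=1$ we have $\log|G(b+it)|=\log|F(b+it)|-\log|\eta_{n_F}/n_F^{b}|$. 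Thus the lemma is equivalent to
\[
2\pi\sum_{\substack{T<\gamma_F<T+U\\\beta_F>b}}(\beta_F-b)=\int_T^{T+U}\log|G(b+it)|\,dt+O(\log T).
\]

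Before applying Littlewood's lemma I would remove zeros lying on the horizontal edges: replacing $T$ and $T+U$ by nearby values in $(T-1,T)$ and $(T+U,T+U+1)$ that avoid ordinates of zeros of $F$ changes the sum on the left by $O(\log T)$ by Lemma~\ref{LEM3.1}, and changes $\int\log|G(b+it)|\,dt$ by $O(\log T)$ as well (upper bound from the polynomial growth of $F$, lower bound from Jensen's theorem exactly as in the proof of Lemma~\ref{LEM3.1}). With this done, apply Littlewood's lemma to $G$ on the rectangle with vertices $b+iT,\ E'+iT,\ E'+i(T+U),\ b+i(T+U)$, where $E'\ge D$ is arbitrary. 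Since $G$ has no zero with real part $\ge D$, the zeros of $G$ inside the rectangle are exactly those with $T<\gamma_F<T+U$ and $\beta_F>b$, and Littlewood's lemma gives
\begin{align*}
2\pi\sum_{\substack{T<\gamma_F<T+U\\\beta_F>b}}(\beta_F-b)
&=\int_T^{T+U}\log|G(b+it)|\,dt-\int_T^{T+U}\log|G(E'+it)|\,dt\\
&\quad+\int_b^{E'}\big(\arg G(\sigma+i(T+U))-\arg G(\sigma+iT)\big)\,d\sigma,
\end{align*}
where $\arg G$ is normalised by $\arg G\to 0$ as $\sigma\to+\infty$. Letting $E'\to+\infty$, the middle term on the right tends to $0$ because $\log|G(E'+it)|\to 0$ uniformly in $t\in[T,T+U]$, while the left-hand side and the first integral are independent of $E'$; it therefore remains to prove $\int_b^{\infty}|\arg G(\sigma+it)|\,d\sigma=O(\log t)$ for $t\in\{T,T+U\}$.

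To this end split the integral at $\sigma=D$. On $[D,\infty)$ one has $|\arg G(\sigma+it)|\le 2|G(\sigma+it)-1|\ll (n_F/(n_F+1))^{\sigma}$ by Lemma~\ref{lem2.2}, so this tail is $O(1)$. On $[b,D]$, using $G'/G=F'/F+\log n_F$ we get $\partial_\sigma\arg G(\sigma+it)=\Im(F'/F)(\sigma+it)$, hence $\arg G(\sigma+it)=\arg G(D+it)-\int_\sigma^D\Im(F'/F)(u+it)\,du$ with $\arg G(D+it)=O(1)$. By Lemma~\ref{LEM3.2}, $\Im(F'/F)(u+it)=\sum_{|\gamma_F-t|<1}\Im\big((u+it-\rho_F)^{-1}\big)+O(\log t)$ for $u$ in this bounded range; integrating over $u\in[b,D]$, the $O(\log t)$ term contributes $O(\log t)$, each integral $\int_b^D\Im\big((u+it-\rho_F)^{-1}\big)\,du$ is a difference of two arctangents and hence $O(1)$, and by Lemma~\ref{LEM3.1} the number of such zeros is $O(\log t)$. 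Therefore $\int_b^D|\arg G(\sigma+it)|\,d\sigma=O(\log t)$, and combined with the tail, $\int_b^{\infty}|\arg G(\sigma+it)|\,d\sigma=O(\log T)$.

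Putting everything together gives the required identity, and substituting $\log|G(b+it)|=\log|F(b+it)|-\log|\eta_{n_F}/n_F^{b}|$ back yields the lemma. The main obstacle is the uniform estimate $\int_b^{\infty}|\arg G(\sigma+it)|\,d\sigma=O(\log t)$; passing to $G$ and letting the right edge of the rectangle tend to $+\infty$ — where both $\log|G|$ and $\arg G$ vanish — is precisely what collapses the right-hand side to $\int_T^{T+U}\log|F(b+it)|\,dt-U\log|\eta_{n_F}/n_F^{b}|+O(\log T)$ and avoids the spurious $O(U)$ contribution that a fixed right edge would generate.
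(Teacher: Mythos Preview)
Your proof is correct and follows the same architecture as the paper's: apply Littlewood's lemma to $G(s)=n_F^{s}F(s)/\eta_{n_F}$, identify the left edge with the main term, and show the remaining contributions are $O(\log T)$. The two genuine technical differences are these. First, the paper keeps the right edge at the fixed abscissa $D$ and bounds $\int_T^{T+U}\log|G(D+it)|\,dt=O(1)$ by pushing the contour to the right via Cauchy's theorem, whereas you let the right edge recede to $+\infty$ directly; these are equivalent, but your version makes the vanishing of the right--edge and tail--argument contributions more transparent. Second, and more substantively, for the bound $\arg G(\sigma+it)=O(\log t)$ on the segment $[b,D]$ the paper uses the classical sign--change device: it sets $H_T(s)=\tfrac12\big(G(s+iT)+\overline{G(\bar s+iT)}\big)$ so that $H_T(\sigma)=\Re G(\sigma+iT)$, and bounds the number of zeros of $H_T$ in a disk by Jensen's theorem, hence the number of crossings of $\Re G=0$, hence $\arg G$. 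You instead invoke the already established partial--fraction Lemma~\ref{LEM3.2} for $F'/F$ and integrate it along the segment; this is shorter here because Lemma~\ref{LEM3.2} is already available, and it reuses existing infrastructure rather than introducing the auxiliary $H_T$. The paper's route has the advantage of being self--contained (it does not rely on Lemma~\ref{LEM3.2}) and of giving the pointwise bound $|\arg G(\sigma+iT)|=O(\log T)$ without worrying about zeros on the horizontal edge. Your preliminary step of nudging $T$ and $T+U$ off ordinates of zeros, with the $O(\log T)$ cost controlled by Lemma~\ref{LEM3.1} and a Jensen estimate, is a standard and acceptable way to handle that technicality.
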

\begin{proof}
For a real number $b$ with $b<D$, by Littlewood's lemma, we have
\begin{align}
\notag2\pi &\sum_{\substack{T<\gamma_F<T+U\\\beta_F>b}}\left(\beta_F-b\right)\\
\notag&=\int_T^{T+U}\log\left|\frac{F\left(b+it\right)}{\eta_{n_F}/n_F^{b+it}}\right|dt-\int_T^{T+U}\log\left|\frac{F\left(D+it\right)}{\eta_{n_F}/n_F^{D+it}}\right|dt\\
\label{Lit}&+\int_{b}^D\arg\frac{F\left(\sigma+i(T+U)\right)}{\eta_{n_F}/n_F^{\sigma+i(T+U)}}d\sigma-\int_{b}^D\arg\frac{F\left(\sigma+iT\right)}{\eta_{n_F}/n_F^{\sigma+iT}}d\sigma
\end{align}
where we take the logarithmic branch of $\arg (F\left(s\right)/(\eta_{n_F}/n_F^{s}))$ as $\arg (F\left(s\right)/(\eta_{n_F}/n_F^{s}))\to0$ as $\sigma\to\infty$. We define the function $G(s)$ by
\begin{align*}
G(s):=\frac{F\left(s\right)}{\eta_{n_F}/n_F^{s}}.
\end{align*}
Because of the choice of $D$, we have $\Re(G(s))\ge1/2$ for $\sigma\geq D$. Furthermore we define $H_{T}(s)$ by
\begin{align*}
H_{T}(s):=\frac{G(s+iT)+\overline{G(\overline{s}+iT)}}{2}.
\end{align*}
Then we have $H_{T}(\sigma)=\Re(G(\sigma+iT))$. When $H_T(\sigma)$ has $n$ zeros in $\sigma\in[b,D]$, we can estimate the argument 
\begin{align*}
|\arg G(\sigma+iT)|\leq \pi n+O(1)
\end{align*}
for $\sigma\in[b,D]$. Thus we estimate the number of zeros of $H_T(\sigma)$.
Let $n_D'(r)$ denote the number of zeros of $H_{T}(s)$  in the circle with center $D$ and radius $r$. By Jensen's theorem, we have
\begin{align*}
&\int_0^{D-b+1}\frac{n_{D}'(r)}{r}dr\\
&=\frac{1}{2\pi}\int_{0}^{2\pi}\log\left|H_{T}\left(D+\left(D-b+1\right)e^{i\theta}\right)\right|d\theta-\log\left|H_{T}\left(D\right)\right|
\end{align*}
As mentioned in the proof of Lemma \ref{LEM3.1}, we have $F(s)\ll|t|^{\deg_1(F(s))\mu(\sigma)+\varepsilon}$, so we also have $H_T(s)\ll|t+T|^{\deg_1(F(s))\mu(\sigma)+\varepsilon}$. Hence we have
\begin{align*}
\int_0^{D-b+1}\frac{n_{D}'(r)}{r}dr= O(\log T).
\end{align*}
On the other hand, we have
\begin{align*}
\int_0^{D-b+1}\frac{n_{D}'(r)}{r}dr\ge \int_{D-b}^{D-b+1}\frac{n_{D}'(r)}{r}dr\ge n_{D}'(D-b)\int_{D-b}^{D-b+1}\frac{1}{r}dr.
\end{align*}
Hence finally we obtain
\begin{align*}
|\arg G(\sigma+iT)|\ll  n_{D}'(D-\sigma)\le n_{D}'(D-b)\ll \log T
\end{align*}
for $\sigma\in[b,D]$.
From this estimate, we can bound the third and fourth terms of  (\ref{Lit}) by $O(\log T)$.

Now, we estimate the second term of  (\ref{Lit}).  By Cauchy's integral formula, we have
\begin{align}
\notag&\left|\int_T^{T+U}\log G\left(D+it\right) dt\right|\\
\label{RightEst}&\leq\left|\int_D^{V}\log G\left(\sigma+iT\right) d\sigma\right|+
\left|\int_T^{T+U}\log G\left(V+it\right) dt\right|+
\left|\int_V^{D}\log G\left(\sigma+i(T+U)\right) d\sigma\right|
\end{align}
for sufficiently large $V>D$. By Lemma \ref{lem2.2}, we have 
$$\log G(\sigma+it)=O\left((1+1/n_F)^{-\sigma}\right).$$
Thus we have
\begin{align*}
\int_T^{T+U}\log G\left(V+it\right) dt\ll U(1+1/n_F)^{-V} \to0
\end{align*}
as $V\to\infty$. The first term of \eqref{RightEst} is estimated as 
\begin{align*}
\int_D^{\infty}\log G\left(\sigma+iT\right) d\sigma\ll (1+1/n_F)^{-D} \ll1.
\end{align*}
Similarly, The third term of \eqref{RightEst} is also bounded by $O(1)$. Hence  the second term of  (\ref{Lit}) is $O(1)$.
\end{proof}

\begin{lem}\label{LEM5.2}
Let $\alpha>1/2$. For $T^\alpha\leq U\leq T$, we have
\begin{align*}
2\pi \sum_{\substack{T<\gamma_F<T+U\\\beta_F>1/2}}\left(\beta_F-\frac{1}{2}\right)\ll U\log\log T.
\end{align*}
\end{lem}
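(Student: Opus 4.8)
The plan is to apply Lemma~\ref{LEM5.1} with the choice $b=\tfrac12$; this is legitimate because $D\ge E_{2F}$ is large, so $\tfrac12<D$. It yields
\[
2\pi\sum_{\substack{T<\gamma_F<T+U\\\beta_F>1/2}}\Big(\beta_F-\tfrac12\Big)=\int_T^{T+U}\log\big|F(\tfrac12+it)\big|\,dt-U\log\Big|\frac{\eta_{n_F}}{n_F^{1/2}}\Big|+O(\log T).
\]
The middle term is $O(U)$ because $\log|\eta_{n_F}/n_F^{1/2}|$ is a constant depending only on $F$, and $\log T=o(U)$ since $U\ge T^{\alpha}$. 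As $\log|F|\le\log^+|F|$, the lemma is therefore reduced to proving $\int_T^{T+U}\log^+|F(\tfrac12+it)|\,dt\ll U\log\log T$.

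The second step is to linearise $\log^+|F|$. Put $D_l:=\max_{1\le j\le M}d_{lj}$. Applying the triangle inequality to the definition of $F$ and using $|\zeta^{(l)}(s)|^{d_{lj}}\le(1+|\zeta^{(l)}(s)|)^{D_l}$ gives, for every $s$,
\[
|F(s)|\le\Big(\sum_{j=1}^M|c_j|\Big)\prod_{l=0}^k\big(1+|\zeta^{(l)}(s)|\big)^{D_l},
\]
whence $\log^+|F(s)|\ll_F 1+\sum_{l=0}^k\log^+|\zeta^{(l)}(s)|$. The essential point of this reduction is that the degree of $F$ is now absorbed into the implied constant, so that \emph{no high moments of $\zeta$ are needed}; it remains to show, for each fixed $l\le k$,
\[
\int_T^{T+U}\log^+\big|\zeta^{(l)}(\tfrac12+it)\big|\,dt\ll U\log\log T.
\]

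For this I would use the short-interval mean-square estimate $\int_T^{T+U}|\zeta^{(l)}(\tfrac12+it)|^2\,dt\ll U(\log T)^{2l+1}$, valid for $T^{\alpha}\le U\le T$ with $\alpha>1/2$ (in fact any bound of the form $U(\log T)^{O(1)}$ would serve). This can be quoted from the literature, or derived from the classical bound $\int_{V}^{V+U}|\zeta(\sigma+iu)|^2\,du\ll U\log T$, uniform for $|\sigma-\tfrac12|\le 1/\log T$ and $V\asymp T$, via Cauchy's formula $\zeta^{(l)}(s)=\frac{l!}{2\pi i}\int_{|w|=1/\log T}\zeta(s+w)w^{-l-1}\,dw$, the Cauchy--Schwarz inequality, and interchanging the order of integration; this is the one and only place where the hypothesis $\alpha>1/2$ enters, guaranteeing that the power-saving error term in the mean-square formula is dominated by $U\log T$ over $[T,T+U]$. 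Granting this, the identity $\int_T^{T+U}\log^+|g(\tfrac12+it)|\,dt=\int_0^\infty\operatorname{meas}\{t\in[T,T+U]:|g(\tfrac12+it)|>e^{\lambda}\}\,d\lambda$, together with the Chebyshev bound $\operatorname{meas}\{\cdots\}\le e^{-2\lambda}\int_T^{T+U}|g(\tfrac12+it)|^2\,dt$ and the trivial bound $\operatorname{meas}\{\cdots\}\le U$, gives (taking $g=\zeta^{(l)}$ and splitting the $\lambda$-integral at $e^{2\lambda}\asymp(\log T)^{2l+1}$) precisely $\ll U\log\log T$. Summing over the finitely many $l$ and tracing back through the two displays above yields $2\pi\sum_{T<\gamma_F<T+U,\,\beta_F>1/2}(\beta_F-\tfrac12)\ll U\log\log T$, all implied constants depending at most on $F$ and $\alpha$.

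The only genuine technical ingredient is the short-interval second-moment estimate for $\zeta^{(l)}$, combined with the observation that one needs it only to within a power of $\log T$, so that the shape of $F$ is irrelevant beyond the implied constants; everything else is bookkeeping with Lemma~\ref{LEM5.1} and the crude distribution-function estimate above.
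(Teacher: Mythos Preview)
Your argument is correct and follows the same opening move as the paper: apply Lemma~\ref{LEM5.1} with $b=\tfrac12$, then bound $\log|F|$ pointwise by a constant multiple of $1+\sum_{l\le k}\log^+|\zeta^{(l)}|$. The divergence is in how the integrals $\int_T^{T+U}\log^+|\zeta^{(l)}(\tfrac12+it)|\,dt$ are handled. The paper does not use the second moment of $\zeta^{(l)}$; instead it passes through Jensen's inequality to reduce to a \emph{fractional} moment $\int_T^{T+U}|\zeta^{(j)}(\tfrac12+it)|^{v_j}\,dt\ll U\log T$ with $v_j=2/(4j+1)$, and obtains this via H\"older from the Ki--Lee bound $\int_T^{T+U}|\zeta^{(j)}/\zeta|^{1/(2j)}\,dt\ll U\sqrt{\log T}$ together with the classical second moment of $\zeta$. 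Your route is more direct: you use $\int_T^{T+U}|\zeta^{(l)}(\tfrac12+it)|^{2}\,dt\ll U(\log T)^{2l+1}$ (which, as you note, follows from the short-interval second moment of $\zeta$ near $\sigma=\tfrac12$ and Cauchy's formula on circles of radius $1/\log T$), and then the layer-cake/Chebyshev step --- which is just the Jensen-inequality step in disguise --- converts this into the required $U\log\log T$. Your approach avoids the specialised Ki--Lee input and makes transparent that any bound $\int_T^{T+U}|\zeta^{(l)}(\tfrac12+it)|^{2}\,dt\ll U(\log T)^{O(1)}$ already suffices; the paper's approach has the minor advantage of citing results exactly as stated in \cite{ki} and \cite{ti} without needing the uniformity of the second moment in $|\sigma-\tfrac12|\le 1/\log T$.
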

\begin{proof}
By the previous lemma, it is enough to prove that there exists a constant $A$ such that
\begin{align*}
\int_T^{T+U}\log\left|F\left(\frac{1}{2}+it\right)\right|dt\leq AU\log\log T.
\end{align*}
We estimate $|F(s)|$ as a product of sum of zeta functions;
\begin{align*}
|F(s)|&\leq \sum_{j=1}^{M} |c_j|\left|\zeta^{(0)}(s)\right|^{d_{0j}}\left|\zeta^{(1)}(s)\right|^{d_{1j}}\cdots\left|\zeta^{(k)}(s)\right|^{d_{kj}}\\
&\leq \left(\sum_{j=1}^{M} |c_j|\right)\left(1+\sum_{l=0}^{k} \left|\zeta^{(l)}(s)\right|\right)^{\deg_1(F(s))}.
\end{align*}
Hence we have
\begin{align*}
\int_T^{T+U}\log\left|F\left(\frac{1}{2}+it\right)\right|dt\leq\deg_1(F(s))\int_T^{T+U}\log\left(1+\sum_{l=0}^{k} \left|\zeta^{(l)}\left(\frac{1}{2}+it\right)\right|\right)dt+O(U).
\end{align*}
For any positive numbers $v_0,\ldots,v_k$, we can bound
\begin{align*}
\leq\frac{\deg_1(F(s))}{v_{\min}}\int_T^{T+U}\log\max\left(1,\left|\zeta^{(0)}\left(\frac{1}{2}+it\right)\right|^{v_0},\ldots,\left|\zeta^{(k)}\left(\frac{1}{2}+it\right)\right|^{v_k}\right)dt+O(U)
\end{align*}
where $v_{\min}:=\min\{v_0,\ldots,v_k\}$. By  Jensen's inequality, we have
\begin{align*}
&\leq\frac{\deg_1(F(s))}{v_{\min}}U\log\left(\frac{1}{U}\int_T^{T+U}\max\left(1,\left|\zeta^{(0)}\left(\frac{1}{2}+it\right)\right|^{v_0},\ldots,\left|\zeta^{(k)}\left(\frac{1}{2}+it\right)\right|^{v_k}\right)dt\right)+O(U)\\
&\leq\frac{\deg_1(F(s))}{v_{\min}}U\\
&\qquad\log\left(1+\frac{1}{U}\int_T^{T+U}\left|\zeta^{(0)}\left(\frac{1}{2}+it\right)\right|^{v_0}dt+\cdots+\frac{1}{U}\int_T^{T+U}\left|\zeta^{(k)}\left(\frac{1}{2}+it\right)\right|^{v_k}dt\right)+O(U).
\end{align*}
Hence, it remains to prove that there exists a positive number $v_j$ such that
\begin{align}
\int_T^{T+U}\left|\zeta^{(j)}\left(\frac{1}{2}+it\right)\right|^{v_j}dt\ll U\log T\label{AUlogT}
\end{align}
for each $0\le j\le k$.
By \cite[Claim]{ki} and \cite[Theorem 7.4]{ti}, we have
\begin{align*}
\int_T^{T+U}\left|\frac{\zeta^{(j)}}{\zeta}\left(\frac{1}{2}+it\right)\right|^{1/(2j)}dt\ll U\sqrt{\log T}
\end{align*}
and
\begin{align*}
\int_T^{T+U}\left|\zeta\left(\frac{1}{2}+it\right)\right|^2dt\ll U\log T.
\end{align*}
Applying H\"older's inequality
\begin{align*}
&\int_T^{T+U}\left|\zeta^{(j)}\left(\frac{1}{2}+it\right)\right|^{v_j}dt\\
&\qquad\leq \left(\int_T^{T+U}\left|\frac{\zeta^{(j)}}{\zeta}\left(\frac{1}{2}+it\right)\right|^{pv_j}dt\right)^{1/p}\left(\int_T^{T+U}\left|\zeta\left(\frac{1}{2}+it\right)\right|^{qv_j}dt\right)^{1/q}
\end{align*}
with $v_j=2/(4j+1),\ p=1+1/(4j)$ and $q=4j+1$,
we have
\begin{align*}
&\int_T^{T+U}\left|\zeta^{(j)}\left(\frac{1}{2}+it\right)\right|^{v_j}dt\ll \left(U\sqrt{\log T}\right)^{1/p}\left(U\log T\right)^{1/q}\ll U\log T.
\end{align*}
Thus we obtain \eqref{AUlogT}.
\end{proof}

\begin{lem}\label{thm5-1}
Let $\alpha>1/2$ and $T^\alpha\leq U\leq T$. Then we have
\begin{align*}
N_{F(s)}^+\left(\frac{1}{2}+\delta;T,T+U\right)=O\left(\frac{U\log\log T}{\delta}\right)
\end{align*}
for $\delta>0$ uniformly.
\end{lem}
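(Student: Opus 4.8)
The plan is to derive Lemma \ref{thm5-1} as a standard consequence of Lemma \ref{LEM5.2} via a counting argument: a zero with $\beta_F > 1/2 + \delta$ contributes at least $\delta$ to the sum $\sum_{T < \gamma_F < T+U, \beta_F > 1/2}(\beta_F - 1/2)$, so the number of such zeros is at most $\delta^{-1}$ times that sum. First I would write
\begin{align*}
\delta\, N_{F(s)}^+\left(\frac{1}{2}+\delta;T,T+U\right)
&=\delta\sum_{\substack{T<\gamma_F<T+U\\ \beta_F>1/2+\delta}}1
\le \sum_{\substack{T<\gamma_F<T+U\\ \beta_F>1/2+\delta}}\left(\beta_F-\frac12\right)\\
&\le \sum_{\substack{T<\gamma_F<T+U\\ \beta_F>1/2}}\left(\beta_F-\frac12\right),
\end{align*}
where the last inequality uses that every term $\beta_F - 1/2$ with $1/2 < \beta_F \le 1/2 + \delta$ is nonnegative (here I am implicitly using that zeros with $\gamma_F$ in the range $(T,T+U)$ and $T$ large lie in a bounded vertical strip by Theorem \ref{thm1}, so $\beta_F$ is bounded and all sums are finite). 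Then I would invoke Lemma \ref{LEM5.2}, which gives $\sum_{T<\gamma_F<T+U,\ \beta_F>1/2}(\beta_F-1/2)\ll U\log\log T$, and divide by $\delta$ to conclude
\begin{align*}
N_{F(s)}^+\left(\frac{1}{2}+\delta;T,T+U\right)\le\frac{1}{\delta}\sum_{\substack{T<\gamma_F<T+U\\ \beta_F>1/2}}\left(\beta_F-\frac12\right)=O\left(\frac{U\log\log T}{\delta}\right).
\end{align*}

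There is essentially no serious obstacle here, since all the analytic work has already been absorbed into Lemma \ref{LEM5.2} (whose own proof rested on the mean-value estimates for $\zeta^{(j)}(1/2+it)$ via Hölder's inequality). The one point that deserves a sentence of care is uniformity in $\delta$: the chain of inequalities above holds for every $\delta > 0$ with the implied constant coming solely from Lemma \ref{LEM5.2}, which does not depend on $\delta$, so the bound $O(U\log\log T/\delta)$ is indeed uniform in $\delta > 0$ as claimed. I would also remark that the hypotheses $\alpha > 1/2$ and $T^\alpha \le U \le T$ are inherited verbatim from Lemma \ref{LEM5.2} and are used only there.

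I expect the write-up to be just a few lines. The companion bound $N_{F(s)}^-\left(\frac{1}{2}-\delta;T,T+U\right)=O(U\log\log T/\delta)$ — needed to complete Theorem \ref{thm5} — would be proved symmetrically, applying Littlewood's lemma / Lemma \ref{LEM5.1} on a line to the left of $1/2$ together with the functional equation (Lemma \ref{lem1}) to control $\int_T^{T+U}\log|F(1/2+it)|\,dt$ from below, thereby bounding $\sum_{\beta_F<1/2}(1/2-\beta_F)$, and then the same elementary counting step; but for the present lemma only the right-hand estimate is required, so the argument above suffices.
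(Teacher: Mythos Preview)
Your proof is correct and is essentially identical to the paper's own argument: the paper also bounds $\sum_{\beta_F>1/2+\delta}(\beta_F-1/2)$ above by $\sum_{\beta_F>1/2}(\beta_F-1/2)\ll U\log\log T$ via Lemma \ref{LEM5.2}, and below by $\delta\, N_{F(s)}^+(1/2+\delta;T,T+U)$, then divides. (Your side remark about the companion bound $N_{F(s)}^-$ is not quite how the paper proceeds---it instead combines Lemma \ref{LEM5.4} with Theorem \ref{1} and a decomposition of $\sum(\beta_F-b)$---but that is outside the scope of the present lemma.)
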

\begin{proof}
From Lemma \ref{LEM5.2}, we have
\begin{align*}
\sum_{\substack{T<\gamma_F<T+U\\\beta_F>1/2+\delta}}\left(\beta_F-\frac{1}{2}\right)\leq\sum_{\substack{T<\gamma_F<T+U\\\beta_F>1/2}}\left(\beta_F-\frac{1}{2}\right)\ll U\log\log T.
\end{align*}
On the other hand, we have
\begin{align*}
\sum_{\substack{T<\gamma_F<T+U\\\beta_F>1/2+\delta}}\left(\beta_F-\frac{1}{2}\right)\geq \delta N_{F(s)}^+\left(\frac{1}{2}+\delta;T,T+U\right).
\end{align*}
By these estimates, we obtain the lemma.
\end{proof}

\begin{lem}\label{LEM5.4}
Let $F(s)$ satisfy \eqref{condition} and $\alpha>1/2$. For sufficiently small $b$, large $T$ and $T^\alpha\leq U\leq T$, we have
\begin{align*}
&2\pi\sum_{T<\gamma_F<T+U}\left(\beta_F-b\right)\\
&\qquad=\deg_1(F(s))\left(\frac{1}{2}-b\right)\left\{(T+U)\log\frac{T+U}{2\pi}-T\log\frac{T}{2\pi}-U\right\}\\
&\qquad+\deg_2(F(s))U\log\log T+U\log \left|\frac{\sum_{j\in J}c_j}{\eta_{n_F}n_F^{-b}}\right|+O\left(\frac{U}{\log T}\right).
\end{align*}
\end{lem}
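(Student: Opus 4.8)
The plan is to reduce the full sum $2\pi\sum_{T<\gamma_F<T+U}(\beta_F-b)$ to the one-sided sum over $\beta_F>b$ controlled by Lemma \ref{LEM5.1}, then evaluate the resulting integral $\int_T^{T+U}\log|F(b+it)|\,dt$ by means of the functional equation in Lemma \ref{lem1} when $b$ is chosen small enough (negative), together with the near-critical-line bound from Lemma \ref{LEM5.2}. First I would note that since $b<E_{1F}$ after shrinking, Theorem \ref{thm1} guarantees there are no zeros of $F(s)$ with $\beta_F\le b$ and $|\gamma_F|$ large (the exceptional discs $\mathcal{C}_{n,\varepsilon}$ sit near the real axis, hence far from the strip $T<t<T+U$); therefore for $T$ large, $\sum_{T<\gamma_F<T+U}(\beta_F-b)=\sum_{T<\gamma_F<T+U,\ \beta_F>b}(\beta_F-b)$, and Lemma \ref{LEM5.1} applies verbatim, giving
\begin{align*}
2\pi\sum_{T<\gamma_F<T+U}(\beta_F-b)=\int_T^{T+U}\log|F(b+it)|\,dt-U\log\left|\frac{\eta_{n_F}}{n_F^b}\right|+O(\log T).
\end{align*}

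Next I would split the main integral. Write $\log|F(b+it)|=\log|F(b+it)|$ and substitute $1-s=b+it$, i.e. $s=(1-b)-it$, so that Lemma \ref{lem1} gives, for $b$ small (so $\sigma=1-b>c$) and $t$ in the range $[T,T+U]$ (so that $|s-(2n-1)|\ge\varepsilon$ automatically),
\begin{align*}
\log|F(b+it)|=\log\left|\sum_{j\in J}c_j\right|+\deg_2(F(s))\log|\log s|+\deg_1(F(s))\log\left|2(2\pi)^{-s}\Gamma(s)\cos\frac{\pi s}{2}\,\zeta(s)\right|+O\!\left(\frac{1}{|\log s|}\right).
\end{align*}
Integrating over $t\in[T,T+U]$: the constant term contributes $U\log|\sum_{j\in J}c_j|$; the error term contributes $O(U/\log T)$; the term $\deg_2(F(s))\log|\log s|$ contributes $\deg_2(F(s))U\log\log T+O(U\log\log T/\log T)$ since $|\log s|=\log T+O(\log\log T)$ on this range. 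For the $\deg_1$ term I would use $\log|2(2\pi)^{-s}| = -\sigma\log(2\pi)+O(1)$, Stirling's formula for $\log|\Gamma(s)|$, the elementary bound $\log|\cos(\pi s/2)| = \tfrac{\pi|t|}{2}+O(1)$, and — crucially — the mean-value input: $\int_T^{T+U}\log|\zeta(1-b-it)|\,dt$ is estimated by reflecting once more through the functional equation of $\zeta$ itself back to $\log|\zeta(b+it)|$ which, for $b<0$, equals $-b\log n + O(\cdots)$ after the Dirichlet-series-type argument, OR — cleaner — I would instead route the $\deg_1$ block through Lemma \ref{LEM5.1} applied to a product of shifted zetas and Lemma \ref{LEM5.2}'s companion estimate, so that all the ``size of $\zeta$'' contributions collapse into the explicit $\deg_1(F(s))(\tfrac12-b)\{(T+U)\log\tfrac{T+U}{2\pi}-T\log\tfrac{T}{2\pi}-U\}$ main term plus $O(U/\log T)$; this is exactly the shape produced by $\int_T^{T+U}\arg\Gamma(1-b-it)\,dt$ evaluated by Stirling, matching the $J_4$ computation in Section 3.

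Assembling: the $-U\log|\eta_{n_F}/n_F^b|$ from Lemma \ref{LEM5.1} combines with $U\log|\sum_{j\in J}c_j|$ to give $U\log|\sum_{j\in J}c_j/(\eta_{n_F}n_F^{-b})|$; the $\deg_2$ piece and the $\deg_1$ Stirling piece appear as stated; all remaining errors are $O(U/\log T)$ (absorbing the $O(\log T)$ from Lemma \ref{LEM5.1}, which is smaller since $U\ge T^\alpha$). The main obstacle I anticipate is making the $\deg_1(F(s))$ contribution rigorous with the sharp error term $O(U/\log T)$ rather than $O(U\log\log T)$: naively one only gets the weaker bound from $\log|\zeta|$ being as large as $\log\log T$ on average, so one must be careful to pull out the \emph{deterministic} archimedean factors ($(2\pi)^{-s}$, $\Gamma(s)$, $\cos(\pi s/2)$) exactly via Stirling and show that the genuinely fluctuating part $\int_T^{T+U}\log|\zeta(b+it)|\,dt$ is itself $-bU\log n_{\zeta}+\cdots$ — i.e. governed by the leading Dirichlet coefficient — so that it contributes only to the explicit $n_F$-dependent logarithm and an $O(U/\log T)$ error, exactly as the parallel estimate for $\zeta^{(k)}(s)-a$ in \cite{on}. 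This is where the assumption \eqref{condition} (ensuring $\sum_{j\in J}c_j\neq0$, hence the main term of Lemma \ref{lem1} does not vanish) is essential.
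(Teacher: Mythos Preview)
Your overall architecture is exactly the paper's: apply Lemma \ref{LEM5.1} (after noting, via Theorem \ref{thm1}, that for $b$ small enough and $T$ large there are no zeros with $\beta_F\le b$ in the strip), then expand $\log|F(b+it)|$ by Lemma \ref{lem1} and integrate term by term. The paper packages the archimedean factor as $\chi(s)=2^s\pi^{-1+s}\sin(\pi s/2)\Gamma(1-s)$ and uses $\log|\chi(s)|=(\tfrac12-\sigma)\log|t/2\pi|+O(1/t)$, which is tidier than treating $(2\pi)^{-s}$, $\Gamma(s)$, $\cos(\pi s/2)$ separately, but this is cosmetic.

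The one place you genuinely overcomplicate things is the $\zeta$-integral. You worry that $\int_T^{T+U}\log|\zeta(1-b-it)|\,dt$ needs a mean-value input, a second reflection, or ``Lemma \ref{LEM5.2}'s companion estimate'' to reach the $O(U/\log T)$ error. It does not: since $b$ is \emph{sufficiently small} (in particular $1-b>1$), the point $1-b-it$ lies in the half-plane of absolute convergence, where $\log\zeta(s)=O(2^{-\sigma})$. The paper simply notes that, by the same contour-shifting device used for the second term of \eqref{Lit} in the proof of Lemma \ref{LEM5.1}, one gets $\int_T^{T+U}\log|\zeta(1-b-it)|\,dt=O(1)$. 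No averaging over $|\zeta|$ on the critical line, no $\log\log T$ losses, no further functional equation. Once you see this, your anticipated ``main obstacle'' disappears and the proof closes with error $O(U/\log T)$ as stated. (A minor slip: you write $|\log s|=\log T+O(\log\log T)$; in fact $|\log s|=\log t+O(1)$, giving $\log|\log s|=\log\log T+O(1/\log T)$, which is what is needed.)
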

\begin{proof}
We use Lemma \ref{LEM5.1}. By Lemma \ref{lem1}, the integrand in Lemma \ref{LEM5.1} can be calculated as
\begin{align}
\notag&\log\left|F(b+it)\right|=\log \left|\sum_{j\in J}c_j\right|+\deg_2(F(s))\log|\log(1-b-it)|\\
&+\deg_1(F(s))\log|\chi(b+it)|+\deg_1(F(s))\log|\zeta(1-b-it)|+O\left(\frac{1}{\log(1-b-it)}\right)\label{b1}
\end{align}
where $\chi(s)=2^s\pi^{-1+s}\sin(\pi s/2)\Gamma(1-s)$.
By equations
\begin{align*}
\log|\log(1-b-it)|=\log\log t+O\left(\frac{1}{\log t}\right)
\end{align*}
and
\begin{align*}
\log|\chi(s)|=\left(\frac{1}{2}-\sigma\right)\log\left|\frac{t}{2\pi}\right|+O\left(\frac{1}{t}\right),
\end{align*}
we have
\begin{align*}
\int_T^{T+U}\log\left|F(b+it)\right|dt&=U\log \left|\sum_{j\in J}c_j\right|+\deg_2(F(s))\left\{(T+U)\log\log(T+U)-T\log\log T\right\}\\
&+\deg_1(F(s))\left(\frac{1}{2}-b\right)\left\{(T+U)\log\frac{T+U}{2\pi}-T\log\frac{T}{2\pi}-U\right\}\\
&+\deg_1(F(s))\int_T^{T+U}\log|\zeta(1-b-it)|dt+O\left(\frac{U}{\log T}\right).
\end{align*}
Similar to the estimate of the second term of \eqref{Lit}, we can  estimate
\begin{align*}
\int_T^{T+U}\log|\zeta(1-b-it)|dt\ll1.
\end{align*}
Moreover we can easily check
\begin{align*}
T(\log\log(T+U)-\log\log T)=O\left(\frac{U}{\log T}\right).
\end{align*}
Applying Lemma \ref{LEM5.1}, we obtain Lemma \eqref{LEM5.4}.
\end{proof}

\noindent{\bf (Proof of Theorem \ref{thm4})}\ 
By Theorem \ref{1} and Lemma \ref{LEM5.4}, we have
\begin{align*}
2\pi N_{F(s)}(T,T+U)=\deg_1(F(s))\left\{(T+U)\log\frac{T+U}{2\pi e}-T\log\frac{T}{2\pi e}\right\}-U\log n_F+O(\log T)
\end{align*}
and
\begin{align*}
&2\pi\sum_{T<\gamma_F<T+U}\left(\beta_F-b\right)\\
&\qquad=\deg_1(F(s))\left(\frac{1}{2}-b\right)\left\{(T+U)\log\frac{T+U}{2\pi e}-T\log\frac{T}{2\pi e}\right\}\\
&\qquad+\deg_2(F(s))U\log\log T+U\log \left|\frac{\sum_{j\in J}c_j}{\eta_{n_F}n_F^{-b}}\right|+O\left(\frac{U}{\log T}\right).
\end{align*}
Hence we have
\begin{align*}
&2\pi\sum_{T<\gamma_F<T+U}\left(\beta_F-\frac{1}{2}\right)\\
&=2\pi\sum_{T<\gamma_F<T+U}\left(\beta_F-b\right)-\left(\frac{1}{2}-b\right)2\pi N_{F(s)}(T,T+U)\\
&=\deg_2(F(s))U\log\log T+U\log \left|\frac{\sum_{j\in J}c_j}{\eta_{n_F}n_F^{-1/2}}\right|+O\left(\frac{U}{\log T}\right).
\end{align*} {\hfill $\square$}
\begin{lem}\label{thm5-2}
Let $F(s)$ satisfy \eqref{condition}, $\alpha>1/2$, and $T^\alpha\leq U\leq T$. Then we have
\begin{align*}
N_{F(s)}^-\left(\frac{1}{2}-\delta;T,T+U\right)=O\left(\frac{U\log\log T}{\delta}\right)
\end{align*}
for $\delta>0$ uniformly.
\end{lem}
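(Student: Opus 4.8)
The plan is to obtain this bound as the mirror image of the proof of Lemma \ref{thm5-1}. There the estimate $\sum_{\beta_F>1/2}(\beta_F-1/2)\ll U\log\log T$ from Lemma \ref{LEM5.2} did the work; for the left half–plane we need the analogous bound on the ``negative part'' $\sum_{\beta_F<1/2}(1/2-\beta_F)$. Since Littlewood's lemma (and hence Lemma \ref{LEM5.2}) only controls the positive part directly, I would recover the negative part by subtracting it from the full sum over zeros with ordinate in $(T,T+U)$, which is exactly what Theorem \ref{thm4} computes.

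First, split the full sum according to the sign of $\beta_F-1/2$; zeros with $\beta_F=1/2$ contribute nothing, so
\begin{align*}
\sum_{\substack{T<\gamma_F<T+U\\\beta_F<1/2}}\left(\frac{1}{2}-\beta_F\right)=\sum_{\substack{T<\gamma_F<T+U\\\beta_F>1/2}}\left(\beta_F-\frac{1}{2}\right)-\sum_{T<\gamma_F<T+U}\left(\beta_F-\frac{1}{2}\right).
\end{align*}
(The rearrangement is legitimate because, by Lemma \ref{LEM3.1}, only $O(U\log T)$ zeros are involved.) By Lemma \ref{LEM5.2} the first term on the right is $O(U\log\log T)$. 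By Theorem \ref{thm4} the second term equals
\begin{align*}
\deg_2(F(s))U\log\log T+U\log\left|\frac{\sum_{j\in J}c_j}{\eta_{n_F}/n_F^{1/2}}\right|+O\left(\frac{U}{\log T}\right),
\end{align*}
which is also $O(U\log\log T)$ since $\deg_2(F(s))$ and the logarithm are constants depending only on $F$. Hence
\begin{align*}
\sum_{\substack{T<\gamma_F<T+U\\\beta_F<1/2}}\left(\frac{1}{2}-\beta_F\right)\ll U\log\log T.
\end{align*}

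Finally, every zero counted by $N_{F(s)}^-(1/2-\delta;T,T+U)$ satisfies $\beta_F<1/2-\delta$, so $1/2-\beta_F>\delta$, whence
\begin{align*}
\delta\,N_{F(s)}^-\left(\frac{1}{2}-\delta;T,T+U\right)\le\sum_{\substack{T<\gamma_F<T+U\\\beta_F<1/2-\delta}}\left(\frac{1}{2}-\beta_F\right)\le\sum_{\substack{T<\gamma_F<T+U\\\beta_F<1/2}}\left(\frac{1}{2}-\beta_F\right)\ll U\log\log T,
\end{align*}
and dividing by $\delta$ gives the claim, uniformly in $\delta>0$. I do not expect any real obstacle here: the substantive analytic work is already contained in Theorem \ref{thm4} and Lemma \ref{LEM5.2}, whose hypotheses ($\alpha>1/2$, $T^\alpha\le U\le T$, and $F(s)$ satisfying \eqref{condition}) coincide with those of the present lemma, and $\delta$ enters neither of them, so the implied constant is independent of $T$, $U$, and $\delta$. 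The only point to verify is the triviality that the main term of Theorem \ref{thm4} is $O(U\log\log T)$, which is immediate.
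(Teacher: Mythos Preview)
Your proof is correct and follows essentially the same idea as the paper: bound the negative part $\sum_{\beta_F<1/2}(1/2-\beta_F)$ by comparing the full sum $\sum(\beta_F-1/2)$ with the positive part controlled by Lemma~\ref{LEM5.2}, and then divide by $\delta$. The only difference is packaging: the paper works with the auxiliary parameter $b$ and invokes Lemma~\ref{LEM5.4} together with Theorem~\ref{1} directly, whereas you invoke Theorem~\ref{thm4}, which has already combined those two ingredients; since Theorem~\ref{thm4} is proved before this lemma, your shortcut is legitimate and yields a slightly cleaner argument.
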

\begin{proof}
We decompose the summation as
\begin{align*}
&2\pi \sum_{T<\gamma_F<T+U}\left(\beta_F-b\right)\\
&=2\pi \sum_{\substack{T<\gamma_F<T+U\\\beta_F>1/2+\delta}}\left\{\left(\beta_F-\frac{1}{2}\right)+\left(\frac{1}{2}-b\right)\right\}\\
&+2\pi \sum_{\substack{T<\gamma_F<T+U\\1/2-\delta\leq\beta_F\leq1/2+\delta}}\left\{\left(\beta_F-\frac{1}{2}\right)+\left(\frac{1}{2}-b\right)\right\}\\
&+2\pi \sum_{\substack{T<\gamma_F<T+U\\\beta_F<1/2-\delta}}\left(\beta_F-b\right).
\end{align*}
By Lemma \ref{LEM5.2}, we have
\begin{align*}
2\pi &\sum_{T<\gamma_F<T+U}\left(\beta_F-b\right)\leq O(U\log\log T)+2\pi \left(\frac{1}{2}-b\right)N_{F(s)}^+\left(\frac{1}{2}+\delta;T,T+U\right)\\
&+2\pi \left(\frac{1}{2}-b\right)\left(N_{F(s)}(T,T+U)-N_{F(s)}^+\left(\frac{1}{2}+\delta;T,T+U\right)-N_{F(s)}^-\left(\frac{1}{2}-\delta;T,T+U\right)\right)\\
&+2\pi \left(\frac{1}{2}-\delta-b\right)N_{F(s)}^-\left(\frac{1}{2}-\delta;T,T+U\right).
\end{align*}
By Theorem \ref{1},
we have
\begin{align}
\notag2\pi \sum_{T<\gamma_F<T+U}\left(\beta_F-b\right)
&\leq \deg_1(F(s))\left(\frac{1}{2}-b\right)U\log T\\
&-2\pi \delta N_{F(s)}^-\left(\frac{1}{2}-\delta;T,T+U\right)+O(U\log\log T).\label{No1}
\end{align}
By Lemma \ref{LEM5.4}, the left-hand side is
\begin{align}
2\pi\sum_{T<\gamma_F<T+U}\left(\beta_F-b\right)=\deg_1(F(s))\left(\frac{1}{2}-b\right)U\log T+O\left(U\log\log T\right).\label{No2}
\end{align}
From (\ref{No1}) and (\ref{No2}), we have
\begin{align*}
&N_{F(s)}^-\left(\frac{1}{2}-\delta;T,T+U\right)\leq O\left(\frac{U\log\log T}{\delta}\right).
\end{align*}
Since $N_{F(s)}^-\left(1/2-\delta;T,T+U\right)$ is positive, we finally obtain
\begin{align*}
&N_{F(s)}^-\left(\frac{1}{2}-\delta;T,T+U\right)= O\left(\frac{U\log\log T}{\delta}\right).
\end{align*}
\end{proof}
By Lemma \ref{thm5-1} and Lemma \ref{thm5-2}, we obtain Theorem \ref{thm5}.


\section{Proofs of Theorem \ref{6} and Corollary \ref{cor7}}

\noindent{\bf (Proof of Theorem \ref{6})}\ 
For sufficiently large $U,V>0$ and $U>c'>c>1$, by Cauchy's integral formula, we have
\begin{align*}
&\sum_{1<\gamma_F<T}x^{\rho_F}\\
&=\frac{1}{2\pi i}\left(\int_{-U+i}^{-c'+i}+\int_{-c'+i}^{V+i}+\int_{V+i}^{V+iT}+\int_{V+iT}^{-c'+iT}+\int_{-c'+iT}^{-U+iT}+\int_{-U+iT}^{-U+i}\right)x^s\frac{F'}{F}(s)ds\\
&=:\frac{1}{2\pi i}(K_1+K_2+K_3+K_4+K_5+K_6).
\end{align*}
Note that the constant $c$ is defined in the statement of Lemma \ref{lem1}.

The second term $K_2$ does not depend on $T$, so we have $K_2=O(1)$.

By using the series expansion \eqref{alpFF}, we have
\begin{align*}
K_3&=\sum_{d\in n_F^{-n}\mathbb{N}\ (n\in\mathbb{N})}\alpha_{(F'/F)(s)}(d)\int_{V+i}^{V+iT}\left(\frac{x}{d}\right)^sds\\
&=Ti\alpha_{(F'/F)(s)}(x)+\sum_{d\neq x}\alpha_{(F'/F)(s)}(d)\left[\frac{(x/d)^s}{\log(x/d)}\right]_{V+i}^{V+iT}+O(1)\\
&=Ti\alpha_{(F'/F)(s)}(x)+O(1).
\end{align*}

Similar to the estimate on $J_3$ in Section 3, the fourth term can be bounded by $K_4=O(\log T)$.

Applying Lemma \ref{lem7}, we have
\begin{align*}
K_5=O\left(\int_{-c'}^{-U}x^\sigma |\log(1-\sigma-iT)||d\sigma|\right)=O(\log T),
\end{align*} 
\begin{align*}
K_1=O(\log T),
\end{align*}
and
\begin{align*}
K_6=O\left(x^{-U}T|\log (1+U-iT)|\right)
\end{align*}
uniformly for large $U$. Hence taking $U\to\infty$, we obtain $K_1+K_5+K_6=O(\log T)$.

{\hfill $\square$}

\noindent{\bf (Proof of Corollary \ref{cor7})}\ 
To prove Corollary \ref{cor7}, we use Weyl's criterion;
\begin{lem}[Weyl's criterion] \label{WY}
A sequence $\{x_n\}\subset\mathbb{R}$ is uniformly distributed modulo one if, and only if, for any integer $m\neq0$,
\begin{align*}
\lim_{N\to\infty}\frac{1}{N}\sum_{n=1}^{N}e^{2\pi i m x_n}=0.
\end{align*}
\end{lem}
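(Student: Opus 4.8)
The plan is to establish the two implications of the stated equivalence separately, using the standard bridge between uniform distribution modulo one and averages of periodic test functions. Throughout, I write $\{x\}$ for the fractional part of $x$ and recall that $\{x_n\}_{n\ge1}$ is \emph{uniformly distributed modulo one} by definition if, for every pair of reals $0\le a<b\le1$,
$$
\lim_{N\to\infty}\frac{1}{N}\#\{1\le n\le N\ |\ \{x_n\}\in[a,b)\}=b-a .
$$
The first thing I would do is record the reformulation: $\{x_n\}$ is uniformly distributed modulo one if and only if
$$
\lim_{N\to\infty}\frac{1}{N}\sum_{n=1}^{N}f(x_n)=\int_0^1 f(t)\,dt
$$
for every $1$-periodic continuous function $f\colon\mathbb{R}\to\mathbb{C}$. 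The implication ``$\Rightarrow$'' here follows because such an $f$ is uniformly continuous on $\mathbb{R}/\mathbb{Z}$ and hence uniformly approximable by step functions, which are finite linear combinations of indicators $\mathbf{1}_{[a,b)}$; the implication ``$\Leftarrow$'' follows by sandwiching $\mathbf{1}_{[a,b)}$ between two $1$-periodic continuous functions $g^-\le\mathbf{1}_{[a,b)}\le g^+$ with $\int_0^1(g^+-g^-)\,dt<\varepsilon$ and letting $\varepsilon\to0$.

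Granting this reformulation, the forward implication of the lemma is immediate: if $\{x_n\}$ is uniformly distributed modulo one, apply the reformulation to the $1$-periodic continuous function $f(t)=e^{2\pi i m t}$ to get $\frac1N\sum_{n\le N}e^{2\pi i m x_n}\to\int_0^1 e^{2\pi i m t}\,dt=0$ for every integer $m\neq0$. For the reverse implication, suppose $\frac1N\sum_{n\le N}e^{2\pi i m x_n}\to0$ for every integer $m\neq0$; the case $m=0$ gives the trivial identity $\frac1N\sum_{n\le N}1=1=\int_0^1 1\,dt$. By linearity, $\frac1N\sum_{n\le N}P(x_n)\to\int_0^1 P(t)\,dt$ for every trigonometric polynomial $P(t)=\sum_{|m|\le L}c_m e^{2\pi i m t}$. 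Now fix any $1$-periodic continuous $f$ and $\varepsilon>0$; by the trigonometric form of the Weierstrass approximation theorem there is a trigonometric polynomial $P$ with $\sup_t|f(t)-P(t)|<\varepsilon$, whence
$$
\limsup_{N\to\infty}\left|\frac1N\sum_{n=1}^N f(x_n)-\int_0^1 f(t)\,dt\right|
\le \sup_t|f-P|+0+\int_0^1|P-f|\,dt<2\varepsilon .
$$
Letting $\varepsilon\to0$ and invoking the reformulation in the other direction yields that $\{x_n\}$ is uniformly distributed modulo one.

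The only genuine content is the reverse implication, and within it the passage from continuous periodic test functions back to indicators of intervals: this requires constructing, for each interval $[a,b)$ and each $\varepsilon>0$, continuous $1$-periodic functions squeezing $\mathbf{1}_{[a,b)}$ whose integrals lie within $\varepsilon$ of $b-a$, and checking that the resulting two-sided inequalities persist in the limit $N\to\infty$. Everything else—uniform continuity on the circle, linearity of the averages, and the Weierstrass approximation theorem—is standard, and I would simply cite it rather than reproving it.
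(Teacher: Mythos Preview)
Your argument is correct and is the standard textbook proof of Weyl's criterion. The paper itself does not prove this lemma at all; it simply cites \cite[Theorem 2.1]{ku} (Kuipers and Niederreiter) for the proof. So in that sense your approach differs from the paper's only in that you actually supply the argument rather than referring to the literature; there is nothing further to compare.
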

See \cite[Theorem 2.1]{ku} for the proof.

By Theorem \ref{thm1} and Theorem \ref{thm5} with $\delta=(\log\log T)^2/\log T$ and $U=T$, we have
\begin{align}
\notag\sum_{T<\gamma_F<2T}\left|\beta_F-\frac{1}{2}\right|&=\left(\sum_{\substack{T<\gamma_F<2T\\|1/2-\beta_F|\le \delta}}+\sum_{\substack{T<\gamma_F<2T\\|1/2-\beta_F|> \delta}}\right)\left|\beta_F-\frac{1}{2}\right|\\
&=O\left(T(\log\log T)^2\right)+O\left(T\frac{\log T}{\log\log T}\right).\label{add2}
\end{align} 
Substituting $2^{-l}T$ for $T$ and adding \eqref{add2} all $l=1,2,\ldots,l_0$ with suitable $l_0$, we obtain
\begin{align*}
\sum_{1<\gamma_F<T}\left|\beta_F-\frac{1}{2}\right|&=O\left(T\frac{\log T}{\log\log T}\right)=o\left(N_{F(s)}(1,T)\right).
\end{align*} 
Since
\begin{align*}
e^y-1=\int_0^ye^tdt=O\left(|y|\max\{1,e^y\}\right)
\end{align*}
for $y\in\mathbb{R}$, we have
\begin{align*}
|x^{1/2+i\gamma_F}-x^{\beta_F+i\gamma_F}|\le x^{\beta_F}\left|e^{(1/2-\beta_F)\log x}-1\right|\ll\left|\beta_F-\frac{1}{2}\right|\max\{x^{\beta_F},x^{1/2}\}|\log x|.
\end{align*}
Hence for each $x>1$, we have
\begin{align*}
\frac{1}{N_{F(s)}(1,T)}\sum_{1<\gamma_F<T}|x^{1/2+i\gamma_F}-x^{\beta_F+i\gamma_F}|
&\ll \frac{x^{E_{2F}}|\log x|}{N_{F(s)}(1,T)}\sum_{1<\gamma_F<T}\left|\beta_F-\frac{1}{2}\right|\\
&=O\left(\frac{1}{\log\log T}\right).
\end{align*}
Thus by Theorem \ref{6}, for each $x>1$, we have
\begin{align*}
\frac{1}{N_{F(s)}(1,T)}\sum_{1<\gamma_F<T}x^{i\gamma_F}&=\frac{1}{N_{F(s)}(1,T)}x^{-1/2}\sum_{1<\gamma_F<T}x^{1/2+i\gamma_F}\\
&=\frac{1}{N_{F(s)}(1,T)}x^{-1/2}\sum_{1<\gamma_F<T}x^{\beta_F+i\gamma_F}+O\left(\frac{1}{\log\log T}\right)\\
&=O\left(\frac{1}{\log\log T}\right).
\end{align*}
Substituting $x=e^{2\pi m\alpha}$, we obtain
\begin{align*}
\frac{1}{N_{F(s)}(1,T)}\sum_{1<\gamma_F<T}e^{2\pi im\alpha\gamma_F}=O\left(\frac{1}{\log\log T}\right),
\end{align*}
when $m\alpha$ is positive. Considering complex conjugate, we can also estimate the case  when $m\alpha$ is negative.
Hence we obtain Corollary \ref{cor7}.

{\hfill $\square$}


Multiple Zeta Research Center, Kyushu University\\
Motooka, Nishi-ku, Fukuoka 819-0395\\
 Japan\\
E-mail: t-onozuka@math.kyushu-u.ac.jp


\begin{thebibliography}{9}

\bibitem{be} B. C. Berndt, {\it The number of zeros for $\zeta^{(k)}(s)$}, 
J. Lond. Math. Soc. (2) \mbox{\boldmath $2$} (1970), 577--580.

\bibitem{bl} H. Bohr and E. Landau, {\it Ein Satz \"uber Dirichletsche Reihen mit Anwendung auf die $\zeta$-Funktion und die $L$-Funktion}, Rendiconti del Circolo Matematico di Palermo \mbox{\boldmath $37$} (1914), 269--272.

\bibitem{bo} H. Bohr, E. Landau, J. E. Littlewood, {\it Sur la fonction $\zeta(s)$ dans le voisinage de la droite $\sigma=\frac{1}{2}$}, Bull. de l'Acad. royale de Belgique (1913), 3--35.

\bibitem{el} P. D. T. A. Elliott, {\it The Riemann zeta function and coin tossing}, J. Reine Angew. Math. \mbox{\boldmath $254$} (1972), 100--109.

\bibitem{hl} E. Hlawka, {\it \"Uber die Gleichverteilung gewisser Folgen, welche mit den Nullstellen der Zetafunktion zusammenh\"angen}, \"Osterr. Akad. Wiss., Math.-Naturw. Kl. Abt. II \mbox{\boldmath $184$} (1975), 459--471.

\bibitem{ki} H. Ki and Y. Lee, {\it Zeros of the derivatives of the Riemann zeta-function}, Functiones et Approximatio \mbox{\boldmath $47$} (2012), 79--87.

\bibitem{ko} K. P. Koutsaki, A. Tamazyan and A. Zaharescu, {\it On the zeros of linear combinations of derivatives of the Riemann zeta function}, Int. J. Number Theory \mbox{\boldmath $12$} (2016), 1703--1723.

\bibitem{ku} L. Kuipers and H. Niederreiter, {\it Uniform distribution of sequences}, Dover Publication Inc., Mineola, New York, 2006.

\bibitem{la} E. Landau, {\it \"Uber die Nullstellen der Zetafunktion}, Math. Ann. \mbox{\boldmath $71$}, (1912), 548--564.

\bibitem{lee} J. Lee, T. Onozuka and A. I. Suriajaya, {\it Some probabilistic value distributions of  the Riemann zeta function and its derivatives}, Proc. of the Japan Academy, Series A, Mathematical Sciences \mbox{\boldmath $92$}, No. 7, (2016), 82--83.

\bibitem{le} N. Levinson, {\it Almost all roots of $\zeta(s)=a$ are arbitrarily close to $\sigma=1/2$}, Proc. Nat. Acad. Sci. USA \mbox{\boldmath $72$} No. 4, (1975), 1322--1324.

\bibitem{lm} N. Levinson and H. L. Montgomery, {\it Zeros of the derivatives of the Riemann zeta-function}, Acta Math. \mbox{\boldmath $133$}, (1974), 49--65.

\bibitem{na} T. Nakamura, {\it Zeros of polynomials of derivatives of zeta functions}, Proc. Amer. Math. Soc. \mbox{\boldmath $145$}, (2017), 2849--2858.

\bibitem{on} T. Onozuka, {\it On the $a$-points of derivatives of the Riemann zeta function}, European Journal of Mathematics \mbox{\boldmath $3$}, (2017), 53--76.

\bibitem{ra} H. A. Rademacher, {\it Fourier Analysis in Number Theory, Symposium on Harmonic Analysis and Related Integral Transforms}, Cornell Univ., Ithaca, N.Y., 1956., in; {\it Collected Papers of Hans Rademacher}, Vol. II, 434--458, Massachusetts Inst. Tech., Cambridge, Mass., 1974.

\bibitem{spe} R. Speiser, {\it Geometrisches zur Riemannschen Zetafunktion}, Math. Ann. \mbox{\boldmath $110$}, (1935), 514--521.



\bibitem{st} J. Steuding, {\it One hundred years uniform distribution modulo one and recent applications to Riemann's zeta function}, Topics in Math. Analysis and Applications \mbox{\boldmath $94$}, (2014), 659--698.

\bibitem{ti} E. C. Titchmarsh, \emph{The theory of the Riemann zeta-function}, second ed. (revised by D. R. Heath-Brown), Oxford Univ. Press, 1986.

\bibitem{yi} C. Y. Yildirim, {\it Zeros of $\zeta''(s)$} \& {\it $\zeta'''(s)$ in $\sigma<1/2$}, Turkish J. Math. \mbox{\boldmath $24$}, no. 1, (2000), 89--108.

\end{thebibliography}
\end{document}